\newcommand*{\rom}[1]{\expandafter\@slowromancap\romannumeral #1@}
\theoremstyle{definition}
\newtheorem{fact}{fact}
\newtheorem{thm}[fact]{Theorem}
\newtheorem{lemma}[fact]{Lemma}
\newtheorem{prop}[fact]{Proposition}
\newtheorem{corollary}[fact]{Corollary}
\newtheorem{defini}[fact]{Definition}
\title{Optimal results on $ITRM$-recognizability}
\author{Merlin Carl}
\begin{document}

\maketitle

\begin{abstract}
Exploring further the properties of $ITRM$-recognizable reals started in \cite{Ca}, we provide a detailed analysis of recognizable reals and their distribution in G\"odels constructible universe $L$. 
In particular, we show that, for unresetting infinite time register machines, the recognizable reals coincide with the computable reals and that, for $ITRM$s, unrecognizables are generated
at every index $\gamma\geq\omega_{\omega}^{CK}$. We show that a real $r$ is recognizable iff it is $\Sigma_{1}$-definable over $L_{\omega_{\omega}^{CK,r}}$, that $r\in L_{\omega_{\omega}^{CK,r}}$ for every
recognizable real $r$ and that either all or no real generated over an index stage $L_{\gamma}$ are recognizable.
\end{abstract}

\section{Introduction}

Infinite Time Register Machines ($ITRM$'s) and weak (or unresetting) Infinite Time Register Machines ($wITRMs$) 
are a machine model for infinite computations introduced by Peter Koepke and Russell Miller in \cite{KoMi} and \cite{wITRM}, respectively. We will describe these models only shortly. Detailed descriptions of $ITRM$'s and
all of the results about these machines we will use here can be found in \cite{KoMi} and \cite{ITRM}.\\
An $ITRM$ resembles in most of its features a classical universal register machine ($URM$) from \cite{Cu}: It has finitely many registers $R_{1},...,R_{n}$ each of which can store one natural number. An $ITRM$-program consists
of finitely many lines, each of which contains one command. Commands are the increasing and decreasing of a register content by $1$, copying a register content to another register, reading out the $r_{i}$th bit of an oracle (where $r_{i}$
is the content of the $i$th register), jumping to a certain program line provided a certain register 
content is $0$, and stopping.\\
In contrast to $URM$'s, $ITRM$'s allow an arbirary ordinal as their running time. Accordingly, the definition of an $ITRM$-computation now has to take care of limit steps. At successor ordinals, we define the computation in the same way as for $URM$'s.
If $\lambda$ is a limit ordinal, we set the content $R_{i\lambda}$ of the $i$-th register $R_{i}$ at time $\lambda$ to $\liminf_{\iota<\lambda}R_{i\iota}$ iff this limit is finite, and to $0$ otherwise. Likewise, the active program line $Z_{\lambda}$
to be carried out in the $\lambda$th step is $\liminf_{\iota<\lambda}Z_{\iota}$, where the limit is always finite as the set of lines is finite and their indices are therefore bounded.\\

\begin{defini}
 $x\subseteq\omega$ is $ITRM$-computable in the oracle $y\subseteq\omega$ iff there exists an $ITRM$-program $P$ such that, for $i\in\omega$, $P$ with oracle $y$ stops whatever natural number $j$ is in its first register at the start of the computation and returns
$1$ iff $j\in x$ and otherwise returns $0$. A real $ITRM$-computable in the empty oracle is simply called $ITRM$-computable. 
\end{defini}

Apart from computability, which is a direct analogue of the corresponding finite concept, there is a different notion of how an $ITRM$ can 'handle' a real number, which has no interesting analogue in the finite. A classical $URM$ $R$ can only process a finite
part of each oracle, and hence, for each real $r$, there is an open neighbourhood $u$ of $r$ such that $R$ cannot distinguish the elements of $u$. The computing time of an $ITRM$, on the other hand, allows it to repeatedly consider each bit of a real number. 
Hence, it has a chance of identifying individual real numbers. Numbers for which this is possible are called 'recognizable'.

\begin{defini}
Let $r\subseteq\omega$. Then $r$ is recognizable iff there is an $ITRM$-program $P$ such that $P^{x}$ stops with output $1$ iff $x=r$ and otherwise stops with output $0$.
\end{defini}

Most of our notation is standard. $ZF^{-}$ is $ZF$ set theory without the power set axiom.
$\mathfrak{P}(x)$ will denote the power set of $x$. For an $ITRM$-program $P$, $P^{x}(i)\downarrow=j$ means that the program $P$ with oracle $x$ with initial input $i$
in its first register stops with output $j$ in register $1$. We take $R_1$ to be the generic register for input and output and will not care about such details in the further course of this paper. 
By $\omega_{\iota}^{CK}$, we denote the $\iota$-th admissible ordinal, where $\iota\in On$. When we consider admissible ordinals relative to a real $x$, we write $\omega_{\iota}^{CK,x}$. For $X\subseteq L_{\alpha}$, $\Sigma_{1}^{L_{\alpha}}\{X\}$ 
denotes the $\Sigma_{1}$-Skolem hull of $X$ in $L_{\alpha}$ and $\Sigma_{\omega}^{L_{\alpha}}\{X\}$ denotes the elementary hull of $X$ in $L_{\alpha}$. When $H$ is a $\Sigma_{1}$-substructure of some $L_{\alpha}$, 
then $\pi:H\equiv L_{\gamma}$ and $\pi:H\rightarrow_{coll}L_{\gamma}$ denote the Mostowski collapse of $H$ to $L_{\gamma}$ with isomorphism $\pi$. Throughout the paper, $p:\omega\times\omega\rightarrow\omega$ denotes the usual bijection
between $\omega\times\omega$ and $\omega$.

\section{Weak ITRMs}

\begin{thm}{\label{relwITRM}}
 Let $x,y\subseteq\omega$. Then $x$ is $wITRM$-computable in the oracle $y$ iff $x\in L_{\omega_{1}^{CK,y}}[y]$.
\end{thm}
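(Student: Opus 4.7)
The plan is to prove the two inclusions separately, with the harder direction being $\Rightarrow$.

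For the forward direction, the strategy is: if $P$ is a $wITRM$-program computing $x$ from oracle $y$, then for every $i\in\omega$ the computation $P^{y}(i)$ halts at some ordinal stage strictly below $\omega_{1}^{CK,y}$, and the function $i\mapsto P^{y}(i)$ can then be assembled inside $L_{\omega_{1}^{CK,y}}[y]$ by $\Sigma_{1}$-replacement. The first step is to check that the snapshot function $\alpha\mapsto(R_{1\alpha},\dots,R_{n\alpha},Z_{\alpha})$ of a $wITRM$ running with oracle $y$ is uniformly $\Sigma_{1}$-definable over $L[y]$: successor stages are given by the program's command table, while limit stages are defined via $\liminf$, which is itself $\Sigma_{1}$. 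Consequently the halting predicate is $\Sigma_{1}^{L[y]}$. The second step is to argue that halting, if it happens, must happen before $\omega_{1}^{CK,y}$. Assume for contradiction that the computation runs past $\omega_{1}^{CK,y}$ without halting; take $H:=\Sigma_{1}^{L_{\omega_{1}^{CK,y}}[y]}\{y,i,P\}$ and let $\pi:H\to_{coll}L_{\gamma}[y]$ with $\gamma<\omega_{1}^{CK,y}$. Absoluteness of $\Sigma_{1}$ statements for the transitive collapse produces a stage $\gamma$ at which the register/line tuple has already entered its definitive lim-inf behaviour, so no halting occurs in the collapsed model either; by Mostowski this non-halting lifts to $L_{\omega_{1}^{CK,y}}[y]$. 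Once the bound is established, admissibility of $L_{\omega_{1}^{CK,y}}[y]$ produces the full function, and hence $x$, inside that level.

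For the converse, I would use the classical identification
\[
L_{\omega_{1}^{CK,y}}[y]\cap\mathfrak{P}(\omega)=\{x\subseteq\omega:x\in\Delta_{1}^{1}(y)\},
\]
so it suffices to show every $\Delta_{1}^{1}(y)$ real is $wITRM$-computable from $y$. The plan is to show that a $wITRM$ with oracle $y$ can decide membership in Kleene's $\mathcal{O}^{y}$. Given a notation $e$, the machine recursively traverses the underlying $y$-computable tree; at each limit ordinal the $\liminf$ rule on the registers causes any value that has permanently stabilized below some finite bound to be recovered, and any diverging branch to be flagged by the register being reset to $0$. This is exactly the mechanism needed to evaluate ranks of well-founded trees uniformly. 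From the ability to decide $\mathcal{O}^{y}$, the standard hyperarithmetic reduction yields $wITRM$-computability of every $\Delta_{1}^{1}(y)$ real from $y$.

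The main obstacle is the precise bound $\omega_{1}^{CK,y}$ in the forward direction. The absence of a reset flag (in contrast to full $ITRM$s) means that the machine cannot detect from the inside that a register has been overwritten from infinity to $0$ at a limit, so one cannot simply quote the standard halting-time analysis from \cite{KoMi}. The Mostowski-collapse/$\Sigma_{1}$-absoluteness argument sketched above is the key tool, and the delicate point will be verifying that the collapse respects the $\liminf$ structure of the register evolution, so that a non-halting computation in $L_{\omega_{1}^{CK,y}}[y]$ really corresponds to a non-halting computation in $L_{\gamma}[y]$ and hence to an unbounded loop in the real model.
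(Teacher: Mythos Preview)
The paper's own proof is a single sentence: ``A straightforward relativization of the proof of Theorem~1 in \cite{wITRM}.'' Your proposal goes well beyond this and attempts to reconstruct the relativized argument from scratch. The overall shape you give --- bound halting times by $\omega_{1}^{CK,y}$ using admissibility of $L_{\omega_{1}^{CK,y}}[y]$, then assemble $x$ by $\Sigma_{1}$-replacement; conversely, compute every $\Delta_{1}^{1}(y)$ real via a $wITRM$ that evaluates $\mathcal{O}^{y}$ --- is indeed what Koepke's original proof relativizes to, so in that sense you and the paper are on the same route, only you are spelling it out.

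There is, however, a genuine wobble in your halting-bound argument. You assume $P^{y}(i)$ runs past $\omega_{1}^{CK,y}$ without halting and seek a contradiction. After collapsing the $\Sigma_{1}$-hull you write ``no halting occurs in the collapsed model either; by Mostowski this non-halting lifts to $L_{\omega_{1}^{CK,y}}[y]$.'' But that merely returns you to your hypothesis: you already knew the computation does not halt inside $L_{\omega_{1}^{CK,y}}[y]$. What the collapse (or, equivalently, $\Sigma_{1}$-reflection in the admissible $L_{\omega_{1}^{CK,y}}[y]$) actually buys you is a stage $\sigma<\omega_{1}^{CK,y}$ at which the configuration already equals the $\liminf$ configuration at $\omega_{1}^{CK,y}$, with all register contents on $[\sigma,\omega_{1}^{CK,y})$ bounded below by their values at $\sigma$. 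That is exactly the cycling criterion of Lemma~\ref{cyclecrit}, and it forces $P^{y}(i)\uparrow$ outright --- which contradicts the assumption that $P$ computes $x$ from $y$. Once phrased this way the step is correct; as you wrote it, the contradiction never actually closes. Your closing paragraph seems to sense this difficulty but does not resolve it.
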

\begin{proof}
A straightforward relativization of the proof of Theorem $1$ in \cite{wITRM}.
\end{proof}

\begin{defini}
Let us denote by $wRECOG$ the set of reals recognizable by a weak $ITRM$ and by $wCOMP$ the set of reals computable by a weak $ITRM$.
\end{defini}

(Recall that $wCOMP=HYP=\mathbb{R}\cap L_{\omega_{1}^{CK}}$.)

The following is a relativization of a theorem in \cite{Ba}:\\

\begin{lemma}{\label{relbar}}
Let $x\in\mathbb{R}$ and let $M\models KP$ be such that $\omega^{M}=\omega$ and $x\in M$. Then $\omega_{1}^{CK,x}$ is an initial segment of $On^{M}$.
\end{lemma}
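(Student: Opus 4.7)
The plan is to reduce the statement to the standard transitive setting via Barwise's Truncation Lemma and then appeal to the characterisation of $\omega_{1}^{CK,x}$ as the least $x$-admissible ordinal strictly above $\omega$.

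First I would pass to the well-founded part $M_{wf}$ of $M$, identified with its Mostowski collapse to a transitive structure. Because $\omega^{M}=\omega$ the model $M$ is $\omega$-standard, so Barwise's Truncation Lemma applies and $M_{wf}$ is itself a transitive model of $KP$. Second, I would check that $x\in M_{wf}$. Since $x\subseteq\omega$ and $\omega^{M}=\omega$, every element of $x$ from the point of view of $M$ is a genuine natural number; hence the transitive closure of $x$ in $M$ is a subset of $\omega$ and is therefore well-founded, so $x$ descends to $M_{wf}$.

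Now let $\beta:=On\cap M_{wf}$. Because $M_{wf}$ is transitive and admissible with $x\in M_{wf}$, the relative constructible hierarchy $L_{\alpha}[x]$ is defined inside $M_{wf}$ for every $\alpha<\beta$, and the standard argument (restricting $\Sigma_{1}$-collection to the inner model and using absoluteness of the $L[x]$-hierarchy) shows that $L_{\beta}[x]$ inherits admissibility from $M_{wf}$. Hence $\beta$ is $x$-admissible and strictly greater than $\omega$, so $\beta\geq\omega_{1}^{CK,x}$ by minimality of $\omega_{1}^{CK,x}$. Since the ordinals of $M_{wf}$ are genuine ordinals contained in $On^{M}$, every $\alpha<\omega_{1}^{CK,x}$ occurs as a well-founded ordinal of $M$, which is precisely the assertion that $\omega_{1}^{CK,x}$ is an initial segment of $On^{M}$.

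The main obstacle is conceptual rather than computational: we need a version of the Truncation Lemma that applies to arbitrary $\omega$-standard models of $KP$ (not assumed transitive or even well-founded above $\omega$) together with the observation that $x$ genuinely descends to $M_{wf}$. Once these ingredients are in place, the argument is a direct unfolding of definitions that mirrors the proof of the unrelativized statement in \cite{Ba}.
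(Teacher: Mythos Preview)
Your argument is correct and supplies exactly what the paper omits: the paper leaves this proof empty, merely flagging the statement as a relativization of a result in Barwise~\cite{Ba} (and crediting Schlicht for a sketch in the acknowledgments). Passing to the well-founded part via the Truncation Lemma, observing that $x$ survives because $\omega^{M}=\omega$ forces $x$ into $M_{wf}$, and then invoking the minimality of $\omega_{1}^{CK,x}$ among $x$-admissibles is the standard route and is presumably what the citation intends.
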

\begin{proof}
\end{proof}

\begin{lemma}{\label{cyclecrit}}
Let $P$ be a $wITRM$-program, and let $x\subseteq\omega$. Then $P^{x}\uparrow$ iff there exist $\sigma<\tau<\omega_{1}^{CK,x}$ such that $Z(\tau)=Z(\sigma)$, $R_{i}(\theta)=R_{i}(\sigma)$ for all $i\in\omega$
and $R_{i}(\gamma)\geq R_{i}(\tau)$ for all $i\in\omega$, $\sigma<\theta<\omega_{1}^{CK,x}$. (Here, $Z(\gamma)$ and $R_{i}(\gamma)$ denote the active program line and the content of register $i$ at time $\gamma$.)
\end{lemma}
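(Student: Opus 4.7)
The plan is to prove both directions of the biconditional separately. The backward direction is a transfinite induction showing the computation enters a genuine loop; the forward direction uses $\Sigma_{1}$-admissibility of $\omega_{1}^{CK,x}$ to force a recurrence.

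For $(\Leftarrow)$, suppose $\sigma<\tau<\omega_{1}^{CK,x}$ satisfy the hypotheses, and write $\delta:=\tau-\sigma$ for the unique ordinal with $\sigma+\delta=\tau$. I would argue by transfinite induction on $\alpha$ that the configuration of $P^{x}$ at stage $\sigma+\alpha\cdot\delta$ coincides with the configuration at stage $\sigma$. Successor steps follow from determinism of the $wITRM$ step function applied block-by-block: once the $\sigma$-configuration recurs at $\sigma+\alpha\cdot\delta$, the ensuing $\delta$ stages reproduce the segment between $\sigma$ and $\tau$ step by step. Limit steps use the inductive hypothesis (so that the $\sigma$-configuration recurs cofinally below the limit) together with the monotonicity $R_{i}(\gamma)\geq R_{i}(\tau)=R_{i}(\sigma)$ to conclude $\liminf_{\iota<\lambda}R_{i}(\iota)=R_{i}(\sigma)$; the analogous statement for the active line $Z$ follows because $Z(\sigma)$ is attained cofinally below the limit and no strictly smaller value appears cofinally often, since registers cannot drop. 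Hence the $\sigma$-configuration recurs unboundedly often, and $P^{x}$ never halts.

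For $(\Rightarrow)$, suppose $P^{x}\uparrow$. The sequence $\alpha\mapsto(Z(\alpha),R_{1}(\alpha),\ldots,R_{n}(\alpha))$ is $\Sigma_{1}$-definable over the admissible structure $L_{\omega_{1}^{CK,x}}[x]$ using only $x$ as parameter. Set $m_{i}:=\liminf_{\iota<\omega_{1}^{CK,x}}R_{i}(\iota)$ for each of the finitely many registers used by $P$, and let $Z^{*}$ be a program line attained cofinally in $\omega_{1}^{CK,x}$ (which exists by pigeonhole on the finite line set). Using $\Sigma_{1}$-admissibility, I would locate a threshold $\sigma_{0}<\omega_{1}^{CK,x}$ past which $R_{i}(\gamma)\geq m_{i}$ for every $i$: otherwise the stages at which some register strictly undershoots its own liminf would form a $\Sigma_{1}$-definable $\omega$-sequence cofinal in $\omega_{1}^{CK,x}$, contradicting admissibility. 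Pigeonholing the target configuration $(Z^{*},m_{1},\ldots,m_{n})$ among the cofinally many stages past $\sigma_{0}$ realizing it yields $\sigma_{0}\leq\sigma<\tau<\omega_{1}^{CK,x}$ with $Z(\sigma)=Z(\tau)=Z^{*}$ and $R_{i}(\sigma)=R_{i}(\tau)=m_{i}$; by the choice of $\sigma_{0}$, the monotonicity $R_{i}(\gamma)\geq m_{i}=R_{i}(\tau)$ then holds on the required interval.

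The main obstacle is the forward direction, specifically bounding $\sigma$ and $\tau$ strictly below $\omega_{1}^{CK,x}$. A naive pigeonhole is insufficient because $\omega_{1}^{CK,x}$ is itself countable; the essential input is $\Sigma_{1}$-admissibility, invoked to prevent certain $\Sigma_{1}$-definable $\omega$-sequences from being cofinal in $\omega_{1}^{CK,x}$. Lemma~\ref{relbar} implicitly underpins this analysis, since it ensures that any relevant $KP$-model containing $x$ with the correct natural numbers sees $\omega_{1}^{CK,x}$ as an initial segment and faithfully represents the computation up to that bound. A subtler point is the interplay between register liminfs and active-line liminfs at limit stages, which is handled by placing $\sigma$ past the finitely many non-recurrent lines so that the matched target configuration is sustained throughout the whole cycle.
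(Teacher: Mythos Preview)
The paper does not actually prove this lemma; it merely records it as ``a straightforward relativization of a result of \cite{wITRM}'', and your sketch is exactly that relativized argument (admissibility of $L_{\omega_{1}^{CK,x}}[x]$ to locate the recurrence below $\omega_{1}^{CK,x}$, and a transfinite induction for the converse), so the approaches coincide.

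One small caveat on your $(\Leftarrow)$ direction: your reason for the program-line $\liminf$ returning $Z(\sigma)$ at limit stages (``since registers cannot drop'') is not the operative one---register monotonicity says nothing about which program lines are visited. In Koepke's original formulation the cycling condition additionally requires that no line with index below $Z(\sigma)$ be visited on $(\sigma,\tau)$, and \emph{that} is what forces $\liminf Z = Z(\sigma)$ at the limit of the repeated blocks; the statement as printed here appears to have dropped this clause (along with a few other evident typos in the quantifiers). With that clause restored, your induction goes through as written.
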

\begin{proof}
This is a straightforward relativization of a result of \cite{wITRM}.
\end{proof}

The following lemma allows us to quantify over countable $\omega$-models of $KP$ by quantifying over reals:\\

\begin{lemma}{\label{kpsigma1}}
 There is a $\Sigma_{1}^{1}$-statement $\phi(v)$ such that $\phi(x)$ holds only if $x$ codes an $\omega$-model of $KP$ and such that, for any countable $\omega$-model $M$ of $KP$, there is a code $c$ for $M$
such that $\phi(c)$ holds.
\end{lemma}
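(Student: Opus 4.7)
The plan is to code countable $\{\in\}$-structures by reals in the standard way: a real $c\subseteq\omega$ codes the structure $M_{c}=(\omega,E_{c})$ with $n\,E_{c}\,m$ iff $p(n,m)\in c$. I then define $\phi(c)$ as the statement asserting the existence of two further sets $f,T\subseteq\omega$ such that: (a) $f$ is the graph of an isomorphism between $(\omega,\mathrm{succ})$ and the elements that $M_{c}$ regards as natural numbers; (b) $T$ is a full Tarski satisfaction class for $M_{c}$, i.e.\ $T$ codes pairs (formula code, assignment code) and obeys the Tarski recursion clauses for atomic formulas, $\neg$, $\wedge$, and $\exists$ with respect to $E_{c}$; and (c) the G\"odel number of every axiom of $KP$, paired with the empty assignment, lies in $T$. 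Clauses (a), (b), (c) are each arithmetic in $(c,f,T)$ (clause (c) using that $KP$ is recursively axiomatised), so prefixing the two set-quantifiers yields a $\Sigma_{1}^{1}$-formula in $c$.

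For the ``only if'' direction, a routine induction on formula complexity shows that whenever $T$ is a full satisfaction class for $M_{c}$, membership in $T$ agrees with the external satisfaction relation of $M_{c}$; so clause (c) forces $M_{c}\models KP$, and clause (a) forces $M_{c}$ to be an $\omega$-model. For the ``every countable $\omega$-model of $KP$ has a witnessing code'' direction, let $M$ be given; I pick any bijection $|M|\cong\omega$ to transport $M$ onto $\omega$ and so obtain a code $c$, take $T$ to be the genuine satisfaction relation of $M_{c}$ -- which is a subset of $\omega$ computed externally in $V$ -- and take $f$ to be the isomorphism between $\omega$ and the naturals of $M$ furnished by the $\omega$-model assumption.

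The only nontrivial technical point is verifying that clause (b) really is arithmetic in $(c,T)$; this amounts to fixing a recursive encoding of first-order formulas and of finite assignments as natural numbers, and then transcribing each Tarski clause as a local arithmetic condition on $E_{c}$ and $T$. Once this bookkeeping has been set up, everything else -- the recursiveness of the $KP$-axioms making (c) arithmetic, the $\omega$-isomorphism check witnessed by $f$, and the collapse of the two outer set existentials into a single $\Sigma_{1}^{1}$-prefix -- is routine.
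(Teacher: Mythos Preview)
Your argument is correct but takes a different route from the paper's. The paper avoids satisfaction classes by \emph{rigidifying the code}: it requires that each standard integer $i$ be represented by the fixed index $2i$ and that $\omega$ be represented by $1$, so that the $\omega$-model condition becomes the recursive infinitary conjunction of the arithmetic statements $P_k$ (``the element coded $2k$ has exactly the $E_c$-predecessors coded $2j$ for $j<k$'') together with an analogous $P_\omega$; the $KP$ axioms contribute a further recursive conjunction of arithmetic conditions (one per axiom, each axiom's quantifier structure unfolding to a first-order condition on $c$), and the paper then invokes the classical fact that a hyperarithmetic conjunction of arithmetic predicates is $\Sigma_1^1$. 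Your version instead keeps the coding flexible and exports all the work to the second-order existential witnesses $T$ and $f$. This is more self-contained---indeed the satisfaction-class trick you use is one standard way to \emph{prove} the infinitary-conjunction-to-$\Sigma_1^1$ reduction the paper simply cites---at the price of the coding bookkeeping you flag in your last paragraph. A minor bonus of your route is that your $\phi$ holds of \emph{every} code of a given countable $\omega$-model of $KP$, whereas the paper's $\phi$ holds only for codes obeying the ``$i\mapsto 2i$, $\omega\mapsto 1$'' convention; since the lemma only asks for \emph{some} code, either suffices.
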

\begin{proof}
Every countable $\omega$-model $M$ of $KP$ can be coded by a real $c(M)$ in such a way that the $i\in\omega$ is represented by $2i$ in $c$ and $\omega$ is represented by $1$.
We can then consider a set $S$ of statements saying that a real $c$ codes a model of $KP$ together with $\{P_{k}|k\in\omega+1\}$, where $P_{k}$ is the statement $\forall{i<k}(p(2i,2k)\in c)\wedge\forall{j}\exists{i<k}(p(j,k)\in{c}\rightarrow j=2i)$ for 
$k\in\omega$ and $P_{\omega}$ is the statement $\forall{i}(p(2i,1)\in c)\wedge\forall{j}\exists{i}(p(j,1)\in c\rightarrow j=2i)$.
Then $\bigwedge{S}$ is a hyperarithmetic conjunction of arithmetic formulas in the predicate $c$. But such a conjunction is equivalent to a $\Sigma_{1}^{1}$-formula.
\end{proof}

\begin{thm}{\label{wsigma1}}
Let $x$ be recognizable by a $wITRM$. Then $\{x\}$ is a $\Sigma_{1}^{1}$-singleton.
\end{thm}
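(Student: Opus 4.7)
The plan is to express ``$y=x$'' by asserting the existence of a countable $\omega$-model $M$ of $KP$ containing $y$ in which the recognizing program halts on $y$ with output $1$, and then use Lemmata~\ref{relbar} and~\ref{cyclecrit} to transfer halting between such an $M$ and $V$. Let $P$ be the $wITRM$-program recognizing $x$ and set
$$\psi(y)\equiv\exists c\,\Big[\phi(c)\wedge y\text{ is represented as an element of the model }M_c\text{ coded by }c\wedge M_c\models P^y{\downarrow}=1\Big],$$
where $\phi$ is the $\Sigma_{1}^{1}$-formula from Lemma~\ref{kpsigma1}. The clause $\phi(c)$ is $\Sigma_{1}^{1}$ in $c$, while ``$y$ is represented in $M_c$'' (namely, some $e\in\omega$ codes the set $y$ under the coding scheme of Lemma~\ref{kpsigma1}) and ``$M_c\models P^y{\downarrow}=1$'' (a first-order statement about $M_c$) are arithmetic in $c,y$. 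Hence $\psi$ is $\Sigma_{1}^{1}$, and it remains to verify $\psi(y)\Leftrightarrow y=x$.

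For the direction $y=x\Rightarrow\psi(y)$: since $P$ recognizes $x$ we have $P^{x}{\downarrow}=1$, and by Lemma~\ref{cyclecrit} the halting must occur at some stage $\alpha<\omega_{1}^{CK,x}$, for otherwise a cycle in the initial segment $[0,\omega_{1}^{CK,x})$ would preclude halting entirely. The set $M:=L_{\omega_{1}^{CK,x}}[x]$ is then a countable $\omega$-model of $KP$ containing $x$ together with the halting computation, so $M\models P^{x}{\downarrow}=1$. Lemma~\ref{kpsigma1} then provides a code $c$ for $M$ with $\phi(c)$, witnessing $\psi(x)$.

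The main obstacle is the converse direction. Suppose $M$ is an $\omega$-model of $KP$ with $y\in M$ and $M\models P^{y}{\downarrow}=1$. By Lemma~\ref{relbar}, $\omega_{1}^{CK,y}$ is an initial segment of $On^{M}$, so the ordinals of $M$ below $\omega_{1}^{CK,y}$ lie in the well-founded part and agree with the corresponding ordinals of $V$. Since $M$ is an $\omega$-model, successor steps of the $wITRM$-computation are absolute, and limit steps coincide with the real ones because the liminf is taken over identical sets of ordinals; hence $M$'s computation of $P^{y}$ on the initial segment $[0,\omega_{1}^{CK,y})$ agrees with the real one. If the real $P^{y}$ did not halt, Lemma~\ref{cyclecrit} would furnish cycle witnesses $\sigma<\tau<\omega_{1}^{CK,y}$, which lie in the well-founded part of $M$ and are therefore recognized inside $M$ as well; but $KP$ proves the cycle criterion, so $M$ would conclude non-halting, contradicting $M\models P^{y}{\downarrow}=1$. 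Hence $P^{y}$ really halts at some $\beta<\omega_{1}^{CK,y}$; since $\beta$ lies in the well-founded part of $M$, $M$'s halting step is the same $\beta$ with the same output, so the real output is $1$, whence $y=x$ by recognizability. This makes $\{x\}$ a $\Sigma_{1}^{1}$-singleton.
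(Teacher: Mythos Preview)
Your proof is correct and follows essentially the same strategy as the paper: express ``$y=x$'' by the existence of a countable $\omega$-model of $KP$ containing $y$ in which $P^{y}\downarrow=1$, and use Lemma~\ref{relbar} to guarantee that the computation on the well-founded initial segment $[0,\omega_{1}^{CK,y})$ is absolute between such a model and $V$. Two minor remarks: your case ``if the real $P^{y}$ did not halt'' is vacuous, since $P$ recognizes $x$ and is therefore total---you can go directly to ``$P^{y}$ halts in $V$ at some $\beta<\omega_{1}^{CK,y}$, hence $M$ sees the same halting configuration''; and the paper adds an extra no-cycle conjunct $NC_{P}$ to its formula, which your cleaner absoluteness argument shows to be unnecessary.
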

\begin{proof}
Let $P$ be a program that recognizes $x$ on a $wITRM$. Let $KP(z)$ be a $\Sigma_{1}^{1}$-formula (in the predicate $z$) stating that $z$ codes an $\omega$-model of $KP$ with $\omega$ represented by $1$
and every integer $i$ represented by $2i$ as constructed in Lemma \ref{kpsigma1}. 
Let $E(y,z)$ be a first-order formula (in the predicates $y$ and $z$) stating that the structure coded by $z$ contains $y$. (We can e.g. take $E(y,z)$ to be $\exists{k}\forall{i}(z(i)\leftrightarrow z(p(2i,k)))$.)
Furthermore, let $Acc_{P}(z,y)$ be a first-order formula (in the predicates $y$ and $z$) stating that $P^{y}\downarrow=1$ in the structure coded by $z$. (Check possibility!)
Finally, let $NC_{P}(y)$ be a first-order formula (in the predicate $y$) stating that in the computation $P^{y}$, there are no two states $s_{\iota_{1}},s_{\iota_{2}}$ with $s_{\iota_{1}}<s_{\iota_{2}}$
such that $s_{\iota_{1}}=s_{\iota_{2}}$ and, between $\iota_1$ and $\iota_2$, no register content is every between the content in $s_{\iota_{1}}$ and no program line with index smaller than that of the 
active line in $s_{\iota_{1}}$ turns up. This is possible in $KP$ models containing $x$ since, by Lemma \ref{relbar} above, $\omega_{1}^{CK,x}$ is an inital segment of the well-founded part of
each such model and, by Lemma \ref{cyclecrit}, the computation either cycles before $\omega_{1}^{CK,x}$ or stops.
Now, take $\phi(a)$ to be $\exists{z}(KP(z)\wedge E(a,z)\wedge Acc_{P}(z,a)\wedge NC_{P}(a))$. This is a $\Sigma_{1}^{1}$-formula. We claim that $x$ is the only solution to $\phi(a)$:
To see this, first note that $x$ clearly is a solution, since $\omega_{1}^{CK,x}$ is an initial segment of every $KP$-model containing $x$ by Lemma \ref{relbar}.\\ 
On the other hand, assume that $b\neq x$. In this case, as $P$ recognizes $x$, we have $P^{b}\downarrow=0$ in the real world, and hence, by absoluteness of $wITRM$-(oracle)-computations for $KP$-models containing
the relevant oracles, also inside $L_{\omega_{1}^{CK,b}}[b]$. Now $L_{\omega_{1}^{CK,b}}[b]$ is certainly a countable $KP$-model containing $b$, hence a counterexample to $\phi(b)$, so $\phi(b)$ is false.
\end{proof}

\begin{corollary}
If a real $x$ is $wITRM$-recognizable, then it is $wITRM$-computable. Hence, there are no lost melodies for weak $ITRM$s.
\end{corollary}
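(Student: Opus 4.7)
The plan is to chain together Theorem \ref{wsigma1} with two standard facts from hyperarithmetic theory and the characterization of $wITRM$-computability given by Theorem \ref{relwITRM}. Assume $x$ is $wITRM$-recognizable; by Theorem \ref{wsigma1} there is a $\Sigma_1^1$-formula $\phi(a)$ whose unique solution is $x$.

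The first step is to promote the $\Sigma_1^1$-singleton property to $\Delta_1^1$-definability of $x$ itself. The key observation uses uniqueness essentially: for every $n\in\omega$,
\[
n\in x \iff \exists a\,(\phi(a)\wedge n\in a),\qquad n\notin x \iff \exists a\,(\phi(a)\wedge n\notin a),
\]
because any $a$ satisfying $\phi$ must coincide with $x$. Both right-hand sides are $\Sigma_1^1$, so $x$ is $\Delta_1^1$.

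The second step is to quote Kleene's classical theorem identifying $\Delta_1^1$ with the hyperarithmetic reals, namely $\Delta_1^1=\mathbb{R}\cap L_{\omega_1^{CK}}$. Combined with Theorem \ref{relwITRM} applied to the empty oracle, which identifies the $wITRM$-computable reals with $\mathbb{R}\cap L_{\omega_1^{CK}}$, this immediately gives that $x$ is $wITRM$-computable.

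I do not expect any genuine obstacle: every step is either a direct invocation of an earlier result in the paper or a textbook fact of effective descriptive set theory. The only point that deserves to be stated rather than merely cited is the $\Sigma_1^1$-singleton to $\Delta_1^1$ reduction above, since it is there that uniqueness of the recognized real plays its role.
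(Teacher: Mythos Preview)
Your proof is correct, and it reaches the conclusion by a somewhat different route than the paper. Both arguments start from Theorem~\ref{wsigma1}, obtaining that $\{x\}$ is a $\Sigma_1^1$-singleton; the divergence is in how one passes from this to $x\in HYP$. You exploit uniqueness directly to produce a $\Delta_1^1$ definition of $x$ as a set of integers and then cite Kleene's theorem $\Delta_1^1=HYP$. The paper instead invokes Kreisel's basis theorem (any nonempty $\Sigma_1^1$ class contains a member not hyperarithmetically above a given non-hyperarithmetic real) and argues by contraposition: if $x\notin HYP$, then $\{x\}$ would have to contain some $b$ with $x\not\leq_h b$, which is impossible since $b=x$. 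Your argument is the more elementary of the two---the $\Sigma_1^1$-singleton to $\Delta_1^1$ reduction is a two-line calculation, and Suslin--Kleene is a lighter citation than Kreisel's basis theorem---while the paper's approach showcases a general structural fact about $\Sigma_1^1$ sets that has independent interest.
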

\begin{proof}
By Kreisel's basis theorem (see \cite{Sa}, p. $75$), if $a\notin HYP$ and $B\neq\emptyset$ is $\Sigma_{1}^{1}$, then $B$ contains some element $b$ such that $a\not{\leq}_{h}b$. 
Now suppose that $x$ is $wITRM$-recognizable. By Theorem \ref{wsigma1}, $\{x\}$ is $\Sigma_{1}^{1}$ and certainly non-empty. If $x$ was not hyperarithmetical, then, by Kreisel's theorem, $\{x\}$ would
contain some $b$ such that $x\neq{\leq}_{h}b$. But the only element of $\{x\}$ is $x$, so $x\notin HYP$ implies $x\neq{\leq}_{h}x$, which is absurd. Hence $x\in HYP$. So $x$ is $wITRM$-computable.
\end{proof}



\section{ITRMs}


We summarize here some results on $ITRM$s which are relevant for our further development.

\begin{thm}{\label{hp}}
 Bounded halting problem, solvable uniformly in the oracle.\\
 Halting times of programs in oracle $x$ below $\omega_{\omega}^{CK,x}$. With $n$ registers below $\omega_{n+1}^{CK,x}$
\end{thm}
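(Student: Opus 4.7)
The plan is to prove the three claims by simultaneous induction on the number $n$ of registers used by the program $P$, using the standard snapshot-and-loop analysis for ITRMs. First I would fix notation: given $P$ with $n$ registers and an oracle $x\subseteq\omega$, let the snapshot at time $\gamma$ be $s(\gamma) = (Z(\gamma), R_{1}(\gamma),\ldots,R_{n}(\gamma))$. The map $\gamma\mapsto s(\gamma)$ is uniformly $\Sigma_{1}^{L[x]}$-definable from $P$ and $x$ (successor steps are explicit, limit steps use the liminf operation, which is absolute for any $\omega$-model of $KP$ containing a code for the relevant initial segment). Consequently, for any admissible $\alpha$ relative to $x$, the computation up to $\alpha$ is carried out inside $L_{\alpha}[x]$.

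For the halting-time bound, I would prove by induction on $n$ that if $P^{x}$ does not halt before $\omega_{n+1}^{CK,x}$, it does not halt at all. The base case $n=0$ is essentially a finite-state argument. For the inductive step, the strategy is the Koepke--Miller ``register flow'' analysis: call a register $R_{i}$ \emph{active} at a limit $\lambda$ if $R_{i}$ assumes arbitrarily large values cofinally below $\lambda$. At such a limit, either $\liminf_{\iota<\lambda}R_{i}(\iota)$ is finite (the register ``resets partially'') or it is infinite and $R_{i\lambda}=0$ (full reset). Using the admissibility of $\omega_{n+1}^{CK,x}$, which is itself a limit of admissibles relative to $x$, one finds a cofinal sequence of limit ordinals at which some fixed register gets reset; between two consecutive such resets, the relevant dynamics depend only on the remaining $n-1$ registers, and by the induction hypothesis this sub-dynamics either halts within the corresponding admissible interval or stabilises. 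A $\Sigma_{1}$-reflection step inside $L_{\omega_{n+1}^{CK,x}}[x]$ then yields a pair $\sigma<\tau<\omega_{n+1}^{CK,x}$ witnessing a genuine repetition of snapshots of the form in Lemma \ref{cyclecrit}, so the computation loops rather than halts. The aggregate bound $\omega_{\omega}^{CK,x}$ follows by taking the supremum over $n$.

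For the uniform solvability of the bounded halting problem, I would use the previous bound to design a single ITRM-program $H$ which, on oracle $x$ and input $\langle P,i\rangle$, simulates $P^{x}(i)$ while running an auxiliary cycle detector of the type justified by Lemma \ref{cyclecrit}: $H$ outputs $1$ if the simulation halts and outputs $0$ as soon as it detects a witness $\sigma<\tau$ to non-halting. By the bound just established, one of these two events must occur before time $\omega_{n+1}^{CK,x}$, so $H$ itself halts on every input; standard padding arguments make $H$ uniform in $P$. The main technical obstacle I anticipate is in the inductive register-flow step: precisely matching the ``depth'' of admissibility $\omega_{n+1}^{CK,x}$ to the reduction ``number of active registers drops by one at each admissible level.'' This requires a careful choice of which register to demote at each limit stage (typically the one with the lexicographically least cofinal trace) and a careful verification that the snapshot function is $\Sigma_{1}$-definable over the relevant admissible segment of $L[x]$, so that admissibility can be applied to produce the looping witness at the correct level.
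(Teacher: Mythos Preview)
The paper does not actually prove this theorem: its entire proof is the single line ``See \cite{ITRM}.'' Your plan is a faithful reconstruction of the standard argument from that reference (the Koepke--Miller register-flow analysis, inducting on the number of registers and using admissibility of the $\omega_{n+1}^{CK,x}$ to extract a looping witness), so in spirit you are doing exactly what the paper defers to.

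One caution: you invoke Lemma~\ref{cyclecrit} both for the halting-time bound and for the halting-problem solver, but in this paper that lemma is stated and proved only for \emph{weak} $ITRM$s, where registers overflow rather than reset. For full $ITRM$s the looping criterion is more delicate (a reset to $0$ can legitimately drop a register below its earlier value without the computation being periodic), and correspondingly the bounded-halting solver in \cite{ITRM} is not a pure snapshot-repetition detector but uses extra registers to simulate $P$ while maintaining a ``clock'' that witnesses divergence. Your outline is still the right shape, but when you fill it in you should either formulate and prove the correct $ITRM$ cycle criterion or follow the simulation-with-extra-registers construction from \cite{ITRM} rather than literally citing Lemma~\ref{cyclecrit}.
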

\begin{proof}
 See \cite{ITRM}. 
\end{proof}

\begin{defini}
 An ordinal $\alpha$ is called $\Sigma_{1}$-fixed iff there exists a $\Sigma_{1}$-statement $\phi$ such that $\alpha$ is minimal with the
property that $L_{\alpha}\models\phi$. Let $\sigma$ denote the supremum of the $\Sigma_{1}$-fixed ordinals.
\end{defini}

\begin{thm}
Denote by $RECOG$ the set of recognizable reals. Then $RECOG\subseteq L_{\sigma}$. Furthermore, for each $\gamma<\sigma$, there exists $\alpha<\sigma$ such that $[\alpha,\alpha+\gamma]$ contains unboundedly
many indices, but $(L_{\alpha+\gamma}-L_{\alpha})\cap RECOG=\emptyset$.
\end{thm}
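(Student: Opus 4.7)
For the inclusion $RECOG\subseteq L_{\sigma}$, the plan is to attach to each recognizing program a specific $\Sigma_{1}$-statement about $L$. Given $r$ recognized by an $ITRM$-program $P$, consider the formula $\phi_{P}\equiv\exists x\subseteq\omega\,(P^{x}\downarrow=1)$, unfolded so that a complete halting computation appears explicitly as a set witnessing termination. By Theorem~\ref{hp}, the halting computation of $P^{r}$ in $V$ occurs before $\omega_{\omega}^{CK,r}$, so $\phi_{P}$ is satisfied already in $L_{\omega_{\omega}^{CK,r}}[r]$ and hence in some constructible $L_{\alpha_{0}}$. Let $\alpha$ be the least ordinal with $L_{\alpha}\models\phi_{P}$; then $\alpha$ is $\Sigma_{1}$-fixed and so $\alpha\leq\sigma$. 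Choose a witness $x\in L_{\alpha}$; upward absoluteness of halted $ITRM$-computations (whose limit rules are absolute once the history lies in the ambient model) gives $P^{x}\downarrow=1$ in $V$, forcing $x=r$. Therefore $r\in L_{\alpha}\subseteq L_{\sigma}$.

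For the gap statement, given $\gamma<\sigma$ the plan is to locate a stretch of $L$-construction rich in new reals but barren of recognizable ones. Since there are only countably many programs, enumerate the recognizable reals as $\{r_{i}:i\in\omega\}$ and let $\tau_{i}<\sigma$ be the $\Sigma_{1}$-fixed ordinal attached in Part~1 to the program recognizing $r_{i}$; each $r_{i}\in L_{\tau_{i}}$. I would then perform a Skolem hull and Mostowski collapse argument: fix a sufficiently large $\eta<\sigma$ with $\gamma<\eta$, form $H=\Sigma_{1}^{L_{\eta}}\{\gamma\}$, and let $\pi:H\equiv L_{\bar\eta}$ be the Mostowski collapse. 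Take $\alpha=\pi(\gamma')$ for a well-chosen $\gamma'\in H$ with $\gamma'+\gamma\in H$. The constructibility machinery still produces new reals cofinally in $\alpha+\gamma$, so the index stages remain unbounded in that interval, while any recognizable $r$ appearing in $L_{\alpha+\gamma}-L_{\alpha}$ would come with a $\Sigma_{1}$-witness inside $L_{\bar\eta}$ that pulls back via $\pi^{-1}$ to a recognizable in the corresponding interval of $L_{\eta}$, which one rules out by arranging that $H$ avoids the countably many $\tau_{i}$ lying in the relevant range.

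The main obstacle is coordinating the choice of $\eta$ and the ordinal $\gamma'$ inside the Skolem hull so that index stages persist cofinally in $[\alpha,\alpha+\gamma]$ while every recognizable-witnessing $\Sigma_{1}$-statement is pushed outside the interval. Because the $\tau_{i}$ can accumulate near $\sigma$, a naive pigeonhole does not suffice; one must use the elementarity of the collapse to transfer the absence of new recognizables from $L_{\eta}$ down to $L_{\bar\eta}$, and separately verify that the index stages are preserved under condensation. Part~1, by contrast, is a direct application of absoluteness and the definition of $\sigma$, and should be routine.
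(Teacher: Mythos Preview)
The paper does not actually prove this theorem: the entire proof reads ``See \cite{Ca}.'' So there is nothing here to compare your attempt against, and I can only evaluate your proposal on its own merits.

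Your argument for $RECOG\subseteq L_{\sigma}$ is the right one, but the clause ``so $\phi_{P}$ is satisfied already in $L_{\omega_{\omega}^{CK,r}}[r]$ and hence in some constructible $L_{\alpha_{0}}$'' hides the only nontrivial step. Knowing that $\phi_{P}$ holds in $L_{\omega_{\omega}^{CK,r}}[r]$ does not by itself place a witness in $L$; you need either $L\prec_{\Sigma_{1}}V$ (so that the true parameter-free $\Sigma_{1}$ sentence $\phi_{P}$ reflects into $L$) or the Jensen--Karp transfer that the paper invokes later in Theorem~\ref{largeCK}. Once that is said explicitly, the remainder --- minimality of $\alpha$, $\Sigma_{1}$-fixedness, and identifying the witness with $r$ by absoluteness of halting computations --- is correct and routine, as you note.

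Your plan for the gap statement is not yet a proof, and you acknowledge this. Two concrete issues. First, you propose to arrange that the hull $H$ avoids the ordinals $\tau_{i}$ at which the recognizing $\Sigma_{1}$-statements are first witnessed, but what must actually be excluded from $[\alpha,\alpha+\gamma]$ are the \emph{index stages} at which the recognizable reals first appear; these lie strictly below the corresponding $\tau_{i}$, and avoiding the $\tau_{i}$ in $H$ says nothing about where those indices land after collapse. Second, the pull-back step is circular as written: you say a recognizable $r\in L_{\alpha+\gamma}-L_{\alpha}$ would lift via $\pi^{-1}$ to a recognizable in the corresponding interval of $L_{\eta}$, ``which one rules out by arranging that $H$ avoids the $\tau_{i}$'' --- but recognizability is a property in $V$, not one preserved by the isomorphism $\pi$, and you have not yet exhibited any interval in $L_{\eta}$ free of recognizables to transfer from. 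A condensation argument of this shape can be made to work, but the hull must be set up so that no new $\Sigma_{1}$-singleton becomes definable over any $L$-level in the collapsed interval, and simultaneously so that index stages remain cofinal there; neither of these is arranged by your sketch.
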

\begin{proof}
 See \cite{Ca}.
\end{proof}

\begin{thm}{\label{relITRM}}
 Let $x,y\subseteq\omega$. Then $x$ is $ITRM$-computable in the oracle $y$ iff $x\in L_{\omega_{\omega}^{CK,y}}[y]$.
\end{thm}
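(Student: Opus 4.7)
The plan is to adapt the unrelativized proof of \cite{ITRM}, substituting $y$ for the empty oracle and replacing $\omega_{\omega}^{CK}$ by $\omega_{\omega}^{CK,y}$ and $L$ by $L[y]$ throughout. Both implications parallel closely the analogue for $wITRM$s in Theorem \ref{relwITRM}, but with the bounded halting theorem (Theorem \ref{hp}) doing the heavy lifting in place of $\omega_{1}^{CK,y}$-admissibility.

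For the direction ``$ITRM$-computable $\Rightarrow$ constructible'': Fix a program $P$ with $n$ registers such that $P^{y}(i)\downarrow=x(i)$. By the relativized version of Theorem \ref{hp}, the halting time of $P^{y}(i)$ is bounded by $\omega_{n+1}^{CK,y}<\omega_{\omega}^{CK,y}$ for every $i$. Since $\omega_{\omega}^{CK,y}$ is a limit of $y$-admissibles, $L_{\omega_{\omega}^{CK,y}}[y]$ is admissible and closed under the transfinite recursion that defines an $ITRM$-computation (successor steps are arithmetic, limit steps require only $\Sigma_{1}$-replacement to form the $\liminf$ of the register sequence and the line sequence). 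Hence the computation $P^{y}(i)$, and therefore $x$ as a whole, is $\Delta_{1}$-definable over $L_{\omega_{\omega}^{CK,y}}[y]$ and lies in it.

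For the direction ``constructible $\Rightarrow$ $ITRM$-computable'': Given $x\in L_{\omega_{\omega}^{CK,y}}[y]$, pick $n$ and $\alpha<\omega_{n}^{CK,y}$ with $x\in L_{\alpha}[y]$, plus a $\Sigma_{1}$-formula $\phi(v,z)$ and a parameter so that $x=\{i:L_{\omega_{n}^{CK,y}}[y]\models\phi(i,\bar{p})\}$. The idea, following \cite{ITRM}, is to simulate $L[y]$-levels on an $ITRM$ with oracle $y$. The key technical device is the uniform solvability of the bounded halting problem with oracle $y$ (Theorem \ref{hp}): evaluating any $\Sigma_{1}$-statement over $L_{\beta}[y]$ with $\beta<\omega_{n}^{CK,y}$ reduces to asking whether a suitable $ITRM$-program with oracle $y$ halts with a prescribed output before time $\omega_{n}^{CK,y}$, and this query is in turn $ITRM$-decidable using at most $n+O(1)$ registers. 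Iterating this on input $i$ produces $x(i)$ in finitely many such bounded queries.

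The main obstacle is the backward direction: with only finitely many registers an $ITRM$ cannot store an arbitrary ordinal $\alpha<\omega_{\omega}^{CK,y}$, so one cannot literally ``walk up the hierarchy''. The standard workaround is to hardcode the index $n$ (depending on $x$) into the program, to use canonical $ITRM^{y}$-codes of ordinals below $\omega_{n}^{CK,y}$ (i.e.\ halting $ITRM^{y}$-programs), and to replace each step of the hierarchy traversal by an invocation of the bounded halting solver at level $n$, so that register values during each sub-computation survive limit stages via $\liminf$. Once this coding scheme is set up exactly as in \cite{ITRM} but with $y$ threaded through every oracle call, the verification is routine.
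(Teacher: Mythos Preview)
Your proposal is correct and matches the paper's approach exactly: the paper's entire proof is the single sentence ``This is a straightforward relativization of the main result of \cite{KoMi},'' and what you have written is precisely a sketch of how that relativization goes, with the oracle $y$ threaded through and $\omega_{\omega}^{CK}$ replaced by $\omega_{\omega}^{CK,y}$.
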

\begin{proof}
This is a straightforward relativization of the main result of \cite{KoeMi}.
\end{proof}

\begin{lemma}
 Let $A\neq\emptyset$ be an $ITRM$-decidable set of reals such that $a\in L_{\omega_{\omega}^{CK,a}}$ for all $a\in A$. Then the $<_{L}$-minimal element of $A$ is recognizable.
\end{lemma}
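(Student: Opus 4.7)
The plan is to describe an $ITRM$-program $Q$ that recognizes the $<_{L}$-least element $a_{0}$ of $A$. Let $P$ be an $ITRM$-program deciding $A$. Given an oracle $x$, $Q^{x}$ will proceed in two phases: first it simulates $P^{x}$, halting with output $0$ if this returns $0$; in the remaining case $x\in A$, so by hypothesis $x\in L_{\omega_{\omega}^{CK,x}}$, and it then suffices to check that no real $y<_{L}x$ belongs to $A$. If some such $y$ is found, $Q^{x}$ halts with output $0$; otherwise $x$ must equal $a_{0}$ and $Q^{x}$ halts with output $1$.

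The second phase is the substantive one. Since $x\in L_{\omega_{\omega}^{CK,x}}$ and the $<_{L}$-ordering on reals is uniformly $\Sigma_{1}$-definable in $L$, every real $y$ with $y<_{L}x$ lies in $L_{\omega_{\omega}^{CK,x}}[x]$ and is therefore $ITRM$-computable from $x$ by Theorem \ref{relITRM}. Accordingly, $Q^{x}$ steps through the reals $y<_{L}x$ in the $<_{L}$-order, generating each such $y$ bit by bit via an $ITRM$-$x$-subroutine. For each enumerated $y$ it simulates $P^{y}$, replacing every oracle query $y(n)$ by the corresponding bit-computation from $x$; as soon as some simulation yields $P^{y}\downarrow=1$, $Q$ halts with output $0$. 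The enumeration is terminated by bitwise comparing the current candidate with the oracle $x$ and halting with output $1$ when they agree (which must happen, in the success case, at some stage below the $L$-rank of $x$).

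The main point to verify is that the whole procedure fits into a single $ITRM$-computation with oracle $x$. For this, one uses a standard admissibility argument: if $y\in L_{\omega_{\omega}^{CK,x}}$, then $y$ appears at some stage below $\omega_{n}^{CK,x}$ for some finite $n$, so $\omega_{m}^{CK,y}\leq\omega_{n+m}^{CK,x}$ for all $m$, and in particular $\omega_{\omega}^{CK,y}\leq\omega_{\omega}^{CK,x}$. By Theorem \ref{hp} the computation $P^{y}$ therefore halts in time $<\omega_{\omega}^{CK,x}$. Each simulated oracle call takes another $ITRM$-$x$-subcomputation of halting time $<\omega_{\omega}^{CK,x}$, and the outer loop over $y$ runs through a segment of the $<_{L}$-order of order type less than $\omega_{\omega}^{CK,x}$, so the entire computation halts in time $<\omega_{\omega}^{CK,x}$. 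I expect the main obstacle to be the finicky bookkeeping needed to implement the nested simulations within finitely many registers so that they interact correctly at limit stages; conceptually, however, the argument is just the natural brute-force search through the $<_{L}$-predecessors of $x$, combined with the witness that $x\in A$ lies low enough in $L$ for this search to be $ITRM$-$x$-effective.
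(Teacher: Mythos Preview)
Your overall strategy is the same as the paper's, but the second phase has a real gap, not merely a bookkeeping issue. You assert that $Q^{x}$ can ``step through the reals $y<_{L}x$ in the $<_{L}$-order, generating each such $y$ via an $ITRM$-$x$-subroutine'', justified by the observation that each such $y$ lies in $L_{\omega_{\omega}^{CK,x}}[x]$. But ``$ITRM$-computable from $x$'' only says that \emph{some} program computes the relevant object from $x$; the number of registers that program needs depends on how high $x$ sits in $L$. Your recognizer $Q$, however, must be one fixed program with, say, $k$ registers, and by Theorem~\ref{hp} any halting run of $Q^{x}$ finishes before $\omega_{k+1}^{CK,x}$. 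Your timing paragraph only bounds the total work by $\omega_{\omega}^{CK,x}$, which is too loose: for $x\in A$ with $x\in L_{\omega_{m}^{CK,x}}\setminus L_{\omega_{m-1}^{CK,x}}$ and $m>k$, there is no reason your enumeration halts at all, so $Q^{x}$ need not output $0$ as a recognizer must.

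The paper closes this gap by hard-coding a program $P$ chosen so that $P^{a}$ computes a code for an $L$-level $L_{\alpha}\ni a$ (such $P$ exists by Theorem~\ref{relITRM} since $a\in L_{\omega_{\omega}^{CK,a}}$). The recognizer then, given $x$, first uses a halting-problem solver for this \emph{fixed} $P$ (Theorem~\ref{hp}) to test whether $P^{x}(i)\downarrow$ for all $i$, and next tests whether the result really codes an $L$-level containing $x$; only if both checks succeed does it search inside that coded level for some $z<_{L}x$ with $z\in A$. These two preliminary tests are precisely what make the procedure total: whenever $P^{x}$ diverges or produces garbage --- which may well happen for $x\in A$, $x\neq a$ --- the recognizer legitimately outputs $0$ without attempting any search through $\{y:y<_{L}x\}$. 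Once you insert this device, your argument coincides with the paper's.
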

\begin{proof}
Let $a$ be the $<_{L}$-minimal element of such an $A$. By Theorem \ref{relITRM}, there is an $ITRM$-program $P$ such that $P^{a}$ computes a code for some $L$-level $L_{\alpha}$ containing $a$.
Let $Q_{A}$ be an $ITRM$-program deciding $A$. Now $a$ can be recognized as follows: Given some $x\subseteq\omega$ in the oracle, first check whether $P^{x}(i)\downarrow$ for all $i\in\omega$, using
a halting problem solver for $P$ which exists by Theorem \ref{hp}. If not, then $x\neq a$. Otherwise, test whether $P^{x}$ computes a code $c$ for an $L$-level containing $x$. This can be done
using the techniques for evaluating the truth predicate in coded structures provided in the last section of \cite{ITRM}. If not, then $x\neq a$. Otherwise, test whether $Q_{A}(x)\downarrow=1$.
If not, then $x\notin A$, so $x\neq a$. Otherwise, use $c$ to search through all reals below $x$ in $<_{L}$ for a real $z<_{L}x$ such that $Q_{A}(z)\downarrow=1$. If such a real is found, then
$x\neq a$. Otherwise, $x=a$.
\end{proof}

\subsection{Unrecognizables Everywhere}

\begin{defini}
 An ordinal $\gamma$ is called an index iff $L_{\gamma+1}-L_{\gamma}$ contains a subset of $\omega$.
\end{defini}

\begin{thm}{\label{largeCK}}
Let $x\in RECOG$. Then $x\in L_{\omega_{\omega}^{CK,x}}$. Consequently, if $x$ is recognizable, but not $ITRM$-computable, we have $\omega_{\omega}^{CK,x}>\omega_{\omega}^{CK}$.
\end{thm}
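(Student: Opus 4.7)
The plan is a $\Sigma_{1}$-Skolem hull plus Mostowski collapse argument, adapted to the $ITRM$ admissibility spectrum. Let $P$ be a program recognizing $x$ with $n$ registers. By Theorem~\ref{hp}, $P^{x}$ halts at some time $\tau<\omega_{n+1}^{CK,x}$, and by absoluteness of $ITRM$-oracle-computations the computation is correctly visible in every admissible extension of $L_{\omega_{n+1}^{CK,x}}[x]$. The cited theorem of \cite{Ca} already gives $RECOG\subseteq L_{\sigma}$, so in particular $x\in L$, which I will use freely.

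I would fix an admissible $L$-level $L_{\alpha}$ containing both $x$ and the halting computation of $P^{x}$. Inside $L_{\alpha}$, the real $x$ becomes $\Sigma_{1}$-definable from the finite parameter $P$ alone via
\[
\phi(i)\ \equiv\ \exists y\,\exists c\,\bigl(c\text{ codes a halting }P^{y}\text{-computation with output }1\wedge i\in y\bigr),
\]
because any witness $(y,c)\in L_{\alpha}$ satisfies $P^{y}\downarrow=1$ in $V$ by absoluteness, forcing $y=x$ by recognizability. Consequently $x\in H:=\Sigma_{1}^{L_{\alpha}}\{\omega\cup\{P\}\}$, and the Mostowski collapse $\pi:H\to L_{\gamma}$ fixes $x$ (since $x\subseteq\omega\subseteq H$), yielding $x\in L_{\gamma}$ with $L_{\gamma}$ admissible.

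It remains to bound $\gamma<\omega_{\omega}^{CK,x}$. The idea is that $L_{\gamma}$ is pointwise $\Sigma_{1}$-definable from $\omega\cup\{P\}$, so the canonical $\Sigma_{1}$-Skolem function of $L_{\gamma}$ with parameter $P$ should give, once $x$ is available, an $x$-computable enumeration of the ordinals below $\gamma$; this would force $\gamma<\omega_{1}^{CK,x}$, hence $\gamma<\omega_{\omega}^{CK,x}$, and we conclude $x\in L_{\gamma}\subseteq L_{\omega_{\omega}^{CK,x}}$. I expect the main obstacle to be precisely this bounding step: making rigorous the passage from pointwise $\Sigma_{1}$-definability of $L_{\gamma}$ together with $x\in L_{\gamma}$ to a bound by $\omega_{1}^{CK,x}$ (or at worst some $\omega_{k}^{CK,x}$), marrying the fine structure of $L$ to the $ITRM$ admissibility spectrum controlled by Theorem~\ref{hp}. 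A secondary subtlety is verifying that the hull can be taken in the pure $L$-hierarchy rather than $L[x]$, which is why the appeal to $RECOG\subseteq L_{\sigma}$ from \cite{Ca} is built in from the start.
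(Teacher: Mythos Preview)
Your approach is genuinely different from the paper's, and the gap you yourself flag is real and, as stated, not fillable along the lines you suggest.

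The paper does not take a hull or invoke $RECOG\subseteq L_{\sigma}$ at all. It works directly at the level $\alpha=\omega_{\omega}^{CK,x}$ and appeals to the Jensen--Karp theorem: a $\Sigma_{1}$ sentence is absolute between $V_{\alpha}$ and $L_{\alpha}$ whenever $\alpha$ is a limit of admissibles. The sentence $\phi\equiv\exists y\,(P^{y}\downarrow=1)$ holds in $L_{\alpha}[x]$ since $P^{x}$ halts before $\alpha$; hence it holds in $V_{\alpha}$ by upward persistence, hence in $L_{\alpha}$ by Jensen--Karp. Any witness $r\in L_{\alpha}$ then satisfies $P^{r}\downarrow=1$ in $V$ by absoluteness of computations, so $r=x$ and $x\in L_{\omega_{\omega}^{CK,x}}$. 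No bounding step is needed because the target level is fixed from the outset.

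Your bounding step, by contrast, does not go through. Pointwise $\Sigma_{1}$-definability of $L_{\gamma}$ from $\omega\cup\{P\}$ yields a $\Sigma_{1}(L_{\gamma})$ surjection $\omega\to\gamma$, hence a well-order of $\omega$ of type $\gamma$ lying in $L_{\gamma+1}$. It does \emph{not} yield an $x$-recursive, or even $x$-hyperarithmetic, well-order: to evaluate that $\Sigma_{1}$ Skolem function you would need access to $\Sigma_{1}$-truth over $L_{\gamma}$, and the mere fact that $x\in L_{\gamma}$ gives you no such access (an element of $L_{\gamma}$ cannot in general compute a code for $L_{\gamma}$). So ``once $x$ is available, an $x$-computable enumeration of the ordinals below $\gamma$'' is exactly the step that fails. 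Note also that $\gamma$ must exceed the halting time $\tau$ of $P^{x}$, and $\tau$ may sit just below $\omega_{n+1}^{CK,x}$, so $\gamma<\omega_{1}^{CK,x}$ is too optimistic in any case. Finally, importing $RECOG\subseteq L_{\sigma}$ just to place $x$ in a pure $L$-level is an unnecessary detour and risks circularity depending on how that result is proved in \cite{Ca}; the Jensen--Karp argument needs no prior knowledge that $x\in L$.
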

\begin{proof}: Let $P$ be a program that recognizes $x$. Then $L_{\omega_{\omega}^{CK,x}}[x]\models\exists{y}P^{y}\downarrow=1$. Now $\phi=\exists{y}P^{y}\downarrow=1$ is a (set theoretical) $\Sigma_{1}$-statement, basically stating that
there are a real $y$ and a set $c$ such that $c$ codes the $P$-computation in the oracle $y$ and ends with $1$. By Jensen-Karp (see above), this is absolute between $V_{\alpha}$ and $L_{\alpha}$ whenever $\alpha$ is a limit
of admissibles. Now $\omega_{\omega}^{CK,x}=\sup\{\omega_{i}^{CK,x}\}$ certainly is a limit of admissibles, so $\phi$ is absolute between $V_{\alpha}$ and $L_{\omega_{\omega}^{CK,x}}$. Also, as $\phi$ is $\Sigma_{1}$, it is certainly
upwards absolute. Hence $L_{\omega_{\omega}^{CK,x}}[x]\models\phi\implies V_{\omega_{\omega}^{CK,x}}\models\phi\implies L_{\omega_{\omega}^{CK,x}}\implies L_{\omega_{\omega}^{CK,x}}[x]$, so $\phi$ is absolute between 
$L_{\omega_{\omega}^{CK,x}}[x]$ and $L_{\omega_{\omega}^{CK,x}}$. As $\phi$ holds in $L_{\omega_{\omega}^{CK,x}}[x]$, it follows that $\phi$ holds in $L_{\omega_{\omega}^{CK,x}}$. So $L_{\omega_{\omega}^{CK,x}}$ contains a real $r$
such that $P^{r}\downarrow=1$. By absoluteness of computations, $P^{r}\downarrow=1$ also holds in $V$. So $P^{r}\downarrow=1$. As $P$ recognizes $x$, it follows that $x=r$. Hence $x\in L_{\omega_{\omega}^{CK,x}}$.\\
Now let $x$ be recognizable, but not computable. As $x$ is not computable, we have $x\notin L_{\omega_{\omega}^{CK}}$. By the first part of the claim, $x\in L_{\omega_{\omega}^{CK,x}}$. Hence $\omega_{\omega}^{CK,x}>\omega_{\omega}^{CK}$.
\end{proof}

This immediately leads to the following dichotomy:

\begin{corollary}{\label{dichotomy}}
 If $x\subseteq\omega$ is such that $\omega_{\omega}^{CK,x}=\omega_{\omega}^{CK}$, then either $x$ is $ITRM$-computable or $x$ is not $ITRM$-recognizable.
\end{corollary}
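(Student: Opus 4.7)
The plan is to derive this directly as a contrapositive of Theorem \ref{largeCK}. The statement to be proved asserts a disjunction, so I would negate both disjuncts and derive a contradiction with the hypothesis $\omega_{\omega}^{CK,x}=\omega_{\omega}^{CK}$.

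Concretely, suppose toward contradiction that $x\subseteq\omega$ satisfies $\omega_{\omega}^{CK,x}=\omega_{\omega}^{CK}$ while simultaneously $x$ is $ITRM$-recognizable \emph{and} $x$ is not $ITRM$-computable. The second clause of Theorem \ref{largeCK} then applies directly, yielding $\omega_{\omega}^{CK,x}>\omega_{\omega}^{CK}$, which contradicts the hypothesis. Hence at least one of the two disjuncts in the conclusion must hold.

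There is essentially no obstacle here; the entire content of the corollary has already been packaged into Theorem \ref{largeCK}. The only thing worth spelling out — and the thing I would make sure is clearly stated — is the logical shape of the argument, namely that the hypothesis $\omega_{\omega}^{CK,x}=\omega_{\omega}^{CK}$ is precisely the negation of the inequality produced by Theorem \ref{largeCK} in the ``recognizable but not computable'' case, so the corollary is just the contrapositive of that implication.
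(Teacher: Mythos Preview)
Your proposal is correct and essentially matches the paper's approach: both derive the corollary immediately from Theorem~\ref{largeCK}. The only cosmetic difference is that you invoke the second clause of Theorem~\ref{largeCK} directly as a packaged contrapositive, whereas the paper re-runs that clause's derivation (not computable $\Rightarrow x\notin L_{\omega_{\omega}^{CK}}=L_{\omega_{\omega}^{CK,x}}\Rightarrow$ not recognizable by the first clause).
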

\begin{proof}
If $x$ is $ITRM$-computable, then $x$ is clearly $ITRM$-recognizable. If $x$ is not $ITRM$-computable, then $x\notin L_{\omega_{\omega}^{CK}}$, hence $x\notin L_{\omega_{\omega}^{CK,x}}$ if $\omega_{\omega}^{CK,x}=\omega_{\omega}^{CK}$.
By the last theorem then, $x$ is not recognizable.
\end{proof}

\begin{lemma}{\label{mat}}
 Let $\alpha$ be admissible, $(P,\leq)\in L_{\alpha}$ be a notion of forcing and $G$ be a filter on $P$ such that $P\cap D\neq\emptyset$ for every dense subset $D$ of $P$ such that $D\in L_{\alpha+1}$. Then 
$L_{\alpha}[G]$ is admissible.
\end{lemma}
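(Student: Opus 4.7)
The plan is to check each axiom of KP in $L_\alpha[G]$. Extensionality, foundation, pair, union, infinity, and $\Delta_0$-separation transfer from $L_\alpha$ via standard forcing arguments: the required witnesses have canonical $P$-names constructed in $L_\alpha$ using only $P \in L_\alpha$ and boundedly many names. The substantive axiom is $\Sigma_1$-collection (equivalently $\Delta_0$-collection), and the rest of the plan is devoted to that.

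Fix a $\Delta_0$ formula $\phi(x,y)$ (with suppressed parameters whose names lie in $L_\alpha$), a set $u \in L_\alpha[G]$ with name $\dot u \in L_\alpha$, and assume $L_\alpha[G] \models \forall x \in u\, \exists y\, \phi(x,y)$. By the truth lemma some $p_0 \in G$ forces this statement. The crucial auxiliary fact I would use is that, because $P \in L_\alpha$ and $\alpha$ is admissible, the forcing relation $\Vdash$ for $\Delta_0$ formulas is $\Delta_1$ over $L_\alpha$, and for $\Sigma_1$ formulas is $\Sigma_1$ over $L_\alpha$. This is a standard induction whose limit step uses $\Sigma_1$-replacement in $L_\alpha$.

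For each $\sigma \in \operatorname{dom}(\dot u)$ set
\[ E_\sigma = \{p \leq p_0 : p \Vdash \sigma \notin \dot u \text{ or } \exists \tau \in L_\alpha\,(p \Vdash \phi(\sigma,\tau))\}. \]
This set lies in $L_{\alpha+1}$ by the definability remark, and is dense below $p_0$: if $r \leq p_0$ does not force $\sigma \notin \dot u$, some $q \leq r$ forces $\sigma \in \dot u$, so $q \Vdash \exists y\, \phi(\sigma, y)$, and unpacking the standard definition of forcing for $\Sigma_1$ statements yields a condition below $q$ witnessing $E_\sigma$. The genericity hypothesis then gives $G \cap E_\sigma \neq \emptyset$ for every $\sigma$.

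To finish, define a partial function $\beta(\sigma, p)$ on $\operatorname{dom}(\dot u) \times \{p : p \leq p_0\}$ as the least $\beta < \alpha$ such that some $P$-name $\tau \in L_\beta$ satisfies $p \Vdash \phi(\sigma, \tau)$. Being $\Sigma_1$ over $L_\alpha$ and applied to a set in $L_\alpha$, its image on its domain is bounded by some $\delta < \alpha$ via $\Sigma_1$-replacement, which is the key use of admissibility. Then $\dot b = \{(\tau, p_0) : \tau \in L_\delta \text{ a } P\text{-name}\} \in L_\alpha$ evaluates to $b = \dot b^G \in L_\alpha[G]$ witnessing collection: given $x = \sigma^G \in u$, pick $p \in G \cap E_\sigma$ and $\tau \in L_\delta$ with $p \Vdash \phi(\sigma, \tau)$, whence $\tau^G \in b$ and $\phi(x, \tau^G)$ holds by the truth lemma. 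The delicate point, and the main obstacle, is aligning the definability level of the forcing relation, the complexity of $E_\sigma$, and the $L_{\alpha+1}$-genericity hypothesis so that everything fits together; once that is done the argument reduces to $\Sigma_1$-replacement inside the admissible $L_\alpha$.
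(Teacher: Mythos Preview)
The paper does not prove this directly at all: it simply cites Theorem~10.1 of Mathias's paper on provident sets and rudimentary set forcing, noting that $L_{\alpha+1}$-genericity suffices because unions of $\Sigma_1(L_\alpha)$- and $\Pi_1(L_\alpha)$-definable subsets of $P$ lie in $L_{\alpha+1}$. Your attempt to give a self-contained verification of the $KP$ axioms is therefore a genuinely different (and more ambitious) route.

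However, there is a real gap in your bounding step. You assert that the partial function $\beta(\sigma,p)$, being $\Sigma_1$ over $L_\alpha$ and defined on a subset of the set $\operatorname{dom}(\dot u)\times P\in L_\alpha$, has image bounded below $\alpha$ ``via $\Sigma_1$-replacement''. But $\Sigma_1$-replacement in $KP$ requires the function to be \emph{total} on the given set; for merely partial $\Sigma_1$ functions the conclusion fails. A standard witness: over $L_{\omega_1^{CK}}$, the map sending $n\in\omega$ to the order type of the linear order coded by $n$ has $\Sigma_1$ graph (the isomorphism to an ordinal is a $\Sigma_1$ witness), is defined exactly on codes of recursive well-orderings, and has range cofinal in $\omega_1^{CK}$. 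So your key inequality $\sup_{(\sigma,p)}\beta(\sigma,p)<\alpha$ is unjustified.

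Closing this gap is precisely where the substance of the result lies. One cannot simply collect witnesses $\tau$ over all $(\sigma,p)$; one has to manufacture a \emph{single} dense set in $L_{\alpha+1}$ (this is where the $\Sigma_1\cup\Pi_1$ definability the paper mentions enters) whose intersection with $G$ yields a condition from which a uniform bound $\delta$ can be read off using only $\Sigma_1$-collection in $L_\alpha$. Arranging this, together with the appropriate formulation of the forcing relation so that the truth lemma for the relevant formulas goes through under mere $L_{\alpha+1}$-genericity (your invocation of a $p_0\in G$ forcing the full $\Pi_2$ statement also needs care here), is exactly the content of the theorem of Mathias that the paper cites.
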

\begin{proof}
 This follows from Theorem $10.1$ of \ref{mat}, since unions of $\Sigma_{1}(L_{\alpha})$ and $\Pi_{1}(L_{\alpha})$-definable subsets of $P$ are clearly elements of $L_{\alpha+1}$.
\end{proof}

\begin{corollary}{\label{matcorr}}
Let $\gamma\geq\omega_{\omega}^{CK}$, let $(P,\leq_{P})$ be the notion of forcing for adding a Cohen real (i.e. $P$ consists of the finite partial functions from $\omega$ to $2$ and $x\leq_{P}y$ iff $y\subseteq x$) and let $G$ be a filter
on $(P,\leq)$ which intersects every dense $D\subseteq P$ such that $D\in L_{\gamma}$. Then $L_{\omega_{i}^{CK}}[G]$ is admissible for every $i\in\omega$.
\end{corollary}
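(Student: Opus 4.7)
The plan is to apply Lemma \ref{mat} to each admissible $\omega_i^{CK}$ separately. Fix $i\in\omega$ and set $\alpha:=\omega_i^{CK}$. Then $\alpha$ is admissible by definition, and Cohen forcing $(P,\leq_P)$ lies in $L_\omega\subseteq L_\alpha$, so the first two hypotheses of Lemma \ref{mat} are satisfied.

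The one point to verify is that $G$ meets every dense $D\subseteq P$ with $D\in L_{\alpha+1}$. Here I would simply compare levels: for every $i\in\omega$ we have $\omega_i^{CK}<\omega_\omega^{CK}\leq\gamma$, so $\omega_i^{CK}+1\leq\omega_\omega^{CK}\leq\gamma$, and consequently $L_{\alpha+1}=L_{\omega_i^{CK}+1}\subseteq L_\gamma$. Thus any dense $D\in L_{\alpha+1}$ is already an element of $L_\gamma$, and by hypothesis on $G$ the intersection $G\cap D$ is nonempty.

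Hence the hypotheses of Lemma \ref{mat} are met at $\alpha=\omega_i^{CK}$, and the lemma yields that $L_{\omega_i^{CK}}[G]$ is admissible. Since $i\in\omega$ was arbitrary, this proves the corollary.

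There is no real obstacle: once one has Lemma \ref{mat} in hand, the corollary is a bookkeeping argument about $L$-levels, and the only thing to check is the inclusion $L_{\omega_i^{CK}+1}\subseteq L_\gamma$, which is immediate from $\gamma\geq\omega_\omega^{CK}$.
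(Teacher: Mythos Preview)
Your proof is correct and follows exactly the same route as the paper's: apply Lemma~\ref{mat} with $\alpha=\omega_i^{CK}$, using the inclusion $L_{\omega_i^{CK}+1}\subseteq L_{\omega_\omega^{CK}}\subseteq L_\gamma$ to verify the genericity hypothesis. The paper's argument is simply the one-line version of what you wrote out in detail.
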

\begin{proof}
 This is immediate from Lemma \ref{mat} as $L_{\gamma}\supseteq L_{\omega_{\omega}^{CK}}\supseteq L_{\omega_{i}^{CK}+1}$ for all $i\in\omega$.
\end{proof}

The following will be used to show that, for each index $\gamma\geq\omega_{\omega}^{CK}$, $L_{\gamma+1}-L_{\gamma}$ contains an unrecognizable real. Recall that $\gamma\in On$ is said to be an index iff $(L_{\gamma+1}-L_{\gamma})\cap^{2}\omega\neq\emptyset$.

\begin{thm}{\label{lowadmissibles}}
 Let $\gamma\geq\omega_{\omega}^{CK}$ be an index. Then there exists $x\in L_{\gamma+1}-L_{\gamma}$ with $\omega_{i}^{CK,x}=\omega_{i}^{CK}$ for all $i\in\omega$. In particular, this implies that $\omega_{\omega}^{CK,x}=\omega_{\omega}^{CK}$.
\end{thm}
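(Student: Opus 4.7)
My plan is to apply Corollary \ref{matcorr} to a Cohen-generic real $c$ over $L_\gamma$ that happens to lie in $L_{\gamma+1}-L_\gamma$. Once I have such a $c$, the corollary immediately gives that $L_{\omega_i^{CK}}[c]$ is admissible for every $i<\omega$, and from there the equality $\omega_i^{CK,c}=\omega_i^{CK}$ follows: since every $c$-admissible ordinal is in particular admissible, $\omega_i^{CK,c}\geq\omega_i^{CK}$, while admissibility of $L_{\omega_i^{CK}}[c]$ makes $\omega_i^{CK}$ itself $c$-admissible, giving $\omega_i^{CK,c}\leq\omega_i^{CK}$. Taking suprema then yields the final "in particular" clause $\omega_\omega^{CK,c}=\omega_\omega^{CK}$.

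To produce $c$ I exploit the index hypothesis. The existence of a real in $L_{\gamma+1}-L_\gamma$ tells us that the $\Sigma_\omega$-projectum of $L_\gamma$ is $\omega$, which by Jensen's fine structure provides a $\Sigma_\omega$-definable (with parameters) partial surjection $h\colon\omega\to L_\gamma$. Using $h$, I can enumerate the dense subsets $(D_n)_{n<\omega}$ of Cohen forcing $(P,\leq_P)$ that belong to $L_\gamma$ by a sequence which is itself definable over $L_\gamma$. I then build $c$ by the usual diagonal recursion: set $p_0:=\emptyset$ and let $p_{n+1}$ be the $<_L$-least extension of $p_n$ lying in $D_n$ (which exists by density). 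Put $c:=\bigcup_{n<\omega}p_n\subseteq\omega$. The whole construction is uniformly definable over $L_\gamma$, so $c\in L_{\gamma+1}$. Genericity is by design, and for each $r\in L_\gamma\cap 2^\omega$ the set $\{p\in P:p\not\subseteq r\}$ is dense in $L_\gamma$, so $c$ meets it and hence $c\neq r$; this places $c$ in $L_{\gamma+1}-L_\gamma$, as required.

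The main obstacle is the fine-structural step in the second paragraph: verifying that the enumeration of dense sets in $L_\gamma$ and the ensuing diagonal recursion for $c$ can be captured already at stage $\gamma+1$, rather than at some later level of $L$. Once this is secured, the rest is routine: Corollary \ref{matcorr} applies because $\gamma\geq\omega_\omega^{CK}$, and the comparison of $c$-admissibles with admissibles sketched in the first paragraph delivers $\omega_i^{CK,c}=\omega_i^{CK}$ for every $i<\omega$.
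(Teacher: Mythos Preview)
Your proposal is correct and follows essentially the same route as the paper: build an $L_\gamma$-generic Cohen real definably over $L_\gamma$ using a surjection $\omega\to L_\gamma$ coming from the index hypothesis, invoke Corollary \ref{matcorr} for admissibility of the $L_{\omega_i^{CK}}[c]$, and conclude $\omega_i^{CK,c}=\omega_i^{CK}$. The only cosmetic differences are that the paper spells out the recursion for $c$ explicitly (rather than citing the projectum) and separates the step $L_{\omega_i^{CK}}[G]=L_{\omega_i^{CK}}[c]$, and it runs the comparison of admissibles as an induction on $i$ rather than via your one-shot inclusion argument.
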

\begin{proof} Let $P$ be the notion of forcing for adding a Cohen real (see above). Let $G$ be an $L_{\gamma}$-generic filter on $P$ (i.e. $G$ intersects every dense subset of $P$, inside $L_{\gamma}$ liegt). 
By Corollary \ref{matcorr}, $L_{\omega_{i}^{CK}}[G]$ is then admissible for all $i\in\omega$.\\
Now let $x:=\bigcup G\in^{2}\omega$. We show that $L_{\omega_{i}^{CK}}[G]=L_{\omega_{i}^{CK}}[x]$:
As $x=\bigcup{G}\in L_{\omega_{i}^{CK}}[G]$, we have $L_{\omega_{i}^{CK}}[x]\subseteq L_{\omega_{i}^{CK}}[G]$ and $G\in L_{\omega_{i}^{CK}}[x]$ (since $G$ is definable from $x$),
hence also $L_{\omega_{i}^{CK}}[G]\subseteq L_{\omega_{i}^{CK}}[x]$. More generally, if $\alpha$ is additively indecomposable (which certainly holds for $\alpha$ admissible) and if we have $x\in L_{\alpha}[y]$ and $y\in L_{\alpha}[x]$, 
then $L_{\alpha}[x]=L_{\alpha}[y]$. To see this, let $z\in L_{\beta}[x]$ ($\beta<\alpha$) and $x\in L_{\gamma}[y]$ ($\gamma<\alpha$). Then $z\in L_{\beta}[x]\in  L_{\gamma+\beta+1}[y]\subseteq L_{\alpha}[y]$, hence
$L_{\alpha}[x]\subseteq L_{\alpha}[y]$. $L_{\alpha}[y]\subseteq L_{\alpha}[x]$ now follows by symmetry. \\
Now it follows that $L_{\omega_{1}^{CK}}[x]\models KP$, i.e. $\omega_{1}^{CK}$ is $x$-admissible, so that $\omega_{1}^{CK}\geq\omega_{1}^{CK,x}\geq\omega_{1}^{CK}$. Consequently, we get $\omega_{1}^{CK,x}=\omega_{1}^{CK}$.
Now assume inductively that $\omega_{i}^{CK}=\omega_{i}^{CK,x}$ for some $i\in\omega$. It then follows that $L_{\omega_{i+1}^{CK}}[x]\models KP$, hence $\omega_{i+1}^{CK}$ is $x$-admissible and $>\omega_{i}^{CK,x}=\omega_{i}^{CK}$.
But then $\omega_{i+1}^{CK}\geq\omega_{i+1}^{CK,x}\geq\omega_{i+1}^{CK}$, so $\omega_{i+1}^{CK,x}=\omega_{i+1}^{CK}$. This now gives us $\omega_{i}^{CK,x}=\omega_{i}^{CK}$ for all $i\in\omega$, so that $\omega_{\omega}^{CK,x}=\omega_{\omega}^{CK}$.\\
Next, we demonstrate that $G$ - and hence $x=\bigcup{G}$ - are definable over $L_{\gamma}$ and hence elements of $L_{\gamma+1}$. This can be seen as follows: 
As $\gamma$ is an index, there is $f:\omega\rightarrow L_{\gamma}$ surjective such that $f\in L_{\gamma+1}$ and hence definable over $L_{\gamma}$ 
Now define $g:(^{<\omega}2,\omega)\rightarrow^{<\omega}2$ thus: Let $g(\vec{s},i)$ be the lexically minimal element of $f(i)$, of which $\vec{s}$ is a subsequence if $f(i)\subseteq^{<\omega}2$ is a dense subset of $P$
otherwise let $g(\vec{s},i)=\vec{s}$. Now, define recursively: $h(0)=\emptyset$, $h(i+1)=g(h(i),i+1)$. This recursion can be carried out definably over $L_{\gamma}$ as follows: Set (for $i\in\omega$) $h(i)=x$ iff\\
$(i=0\wedge x=\emptyset)\vee(i\geq 1\wedge\exists(\vec{s_{0}},...,\vec{s_{i-1}})[\forall{j\in i}((j=0\wedge\vec{s_{0}}=\emptyset)\vee(\vec{s_{j}}=g(\vec{s_{j-1}},j)))\wedge x=g(\vec{s_{i-1}},i)])$.
This is definable over $L_{\gamma}$, as $g$ is definable over $L_{\gamma}$ and all finite sequences of elements of $P$ are contained in $L_{\omega}$, and hence certainly in $L_{\omega_{\omega}^{CK}}$.\\
Finally, we show that $x\notin L_{\gamma}$: Roughly, this follows immediately from the fact that $G$ is definable from $x$ and that $G$ is generic over $L_{\gamma}$ as in the case of Cohen-forcing for $ZFC$ models. 
More precisely, let $z\in\mathbb{R}\cap L_{\gamma}$. Also, let $\beta<\gamma$ be minimal such that $z\in L_{\beta+1}-L_{\beta}$.
Then $D_{z}:=\{b\in^{<\omega}2|\exists{i\in\omega}b(i)\neq z(i)\}$ is dense in $P$ and definable over $L_{\beta}$, hence an element of $L_{\gamma}$. Consequently, every $D_{z}$ has non-emptys intersection with every $L_{\gamma}$-generic filter $G$,
so that $\bigcup{G}\neq z$. As this holds for all $z\in L_{\gamma}$, we get $\bigcup{G}\notin L_{\gamma}$.
\end{proof}

We can now show that new unrecognizables appear wherever possible, i.e. are generated at every index stage:

\begin{thm}
Let $\gamma\geq\omega_{\omega}^{CK}$ be an index. Then $L_{\gamma+1}-L_{\gamma}$ contains an unrecognizable real.
\end{thm}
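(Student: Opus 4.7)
The plan is to show that this theorem is an almost immediate corollary of the combination of Theorem \ref{lowadmissibles} with the dichotomy in Corollary \ref{dichotomy} together with the characterization of $ITRM$-computability from Theorem \ref{relITRM}.

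First I would invoke Theorem \ref{lowadmissibles} directly at the given index $\gamma\geq\omega_{\omega}^{CK}$ to produce a real $x\in L_{\gamma+1}-L_{\gamma}$ satisfying $\omega_{i}^{CK,x}=\omega_{i}^{CK}$ for all $i\in\omega$, and in particular $\omega_{\omega}^{CK,x}=\omega_{\omega}^{CK}$. This already produces a witness sitting at the correct index stage; the only remaining task is to check that it cannot be recognizable.

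Next I would argue that $x\notin L_{\omega_{\omega}^{CK}}$. Since $\gamma\geq\omega_{\omega}^{CK}$, we have $L_{\omega_{\omega}^{CK}}\subseteq L_{\gamma}$, and by construction $x\notin L_{\gamma}$, so certainly $x\notin L_{\omega_{\omega}^{CK}}$. By Theorem \ref{relITRM} applied with empty oracle (equivalently, the unrelativized form stating that $ITRM$-computable reals are precisely those in $L_{\omega_{\omega}^{CK}}$), it follows that $x$ is not $ITRM$-computable.

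Finally I would apply Corollary \ref{dichotomy}: since $\omega_{\omega}^{CK,x}=\omega_{\omega}^{CK}$, the corollary states that $x$ is either $ITRM$-computable or not $ITRM$-recognizable. We have just ruled out the first alternative, so $x\in L_{\gamma+1}-L_{\gamma}$ is an unrecognizable real, as required.

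There is essentially no real obstacle here: all the substantive work was done in Theorem \ref{lowadmissibles} (producing a low real at the desired index stage) and in Theorem \ref{largeCK} (which yielded the dichotomy). The only mildly delicate point is making sure the index $\gamma$ being $\geq\omega_{\omega}^{CK}$ is used in two places — once to meet the hypothesis of Theorem \ref{lowadmissibles}, and once to ensure $L_{\omega_{\omega}^{CK}}\subseteq L_{\gamma}$ so that $x\notin L_{\gamma}$ actually implies $x$ is not computable — but both uses are immediate.
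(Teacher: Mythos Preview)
Your proposal is correct and follows exactly the same route as the paper: invoke Theorem~\ref{lowadmissibles} to obtain $x\in L_{\gamma+1}-L_{\gamma}$ with $\omega_{\omega}^{CK,x}=\omega_{\omega}^{CK}$, then use Corollary~\ref{dichotomy} to conclude that $x$ is unrecognizable. The only difference is that you spell out explicitly why $x$ is not $ITRM$-computable (namely $x\notin L_{\gamma}\supseteq L_{\omega_{\omega}^{CK}}$), which the paper leaves implicit when applying the dichotomy.
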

\begin{proof}
By Theorem \ref{lowadmissibles}, $L_{\gamma+1}-L_{\gamma}$ contains a real $x$ such that $\omega_{\omega}^{CK,x}=\omega_{\omega}^{Ck}$. By Corollary \ref{dichotomy}, $x$ is not recognizable.
\end{proof}

\section{The halting number is recognizable}

We obtain a very natural lost melody by showing that the halting number for $ITRM$s is in fact recognizable. Fix a canonical well-ordering $(P_{i}|i\in\omega\}$ of the $ITRM$-programs in order type $\omega$ by e.g. sorting
the programs lexicographically. This real $h$ is natural insofar its definition is purely internal to $ITRM$s (e.g. not in any way related to $L$) and it is arguably the first non-computable real coming to mind.\\

We start by showing that, given $h$, there is a universal $ITRM$:\\

\begin{lemma}
 There is an $ITRM$-program $P$ such that, for every $(i,j)\in\omega^2$, we have $P^{h}(p(i,j))\downarrow=k+1$ if $P_{i}(j)\downarrow=k$ and $P^{h}(p(i,j))\downarrow=0$ if $P_{i}(j)\uparrow$. That is, $P$ can compute the function
computed by $P_{i}$ given $i$.
\end{lemma}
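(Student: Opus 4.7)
My plan is to exploit $h$ both as a halting oracle and, via a standard $s$-$m$-$n$-style padding trick, as a means to recover output values. Given input $p(i,j)$, the program $P$ first decodes $i$ and $j$ using inverse pairing, which is $ITRM$-computable. It then consults the oracle to check whether $P_{i}(j)\downarrow$; by the definition of the halting number this amounts to reading a single bit of $h$ at a position uniformly computable from $(i,j)$. If this bit indicates divergence, $P$ outputs $0$ and halts, in accordance with the required behaviour.

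In the halting case we recover the output value as follows. For each $k\in\omega$ we produce an index $m=s(i,j,k)\in\omega$ with the property that $P_{m}(0)\downarrow$ iff $P_{i}(j)\downarrow=k$. The program $P_{m}$ is constructed purely syntactically: it hard-codes the constant $j$ into the input register of a copy of $P_{i}$, runs this copy on fresh scratch registers (adding however many extra registers the simulation of $P_{i}$ demands), and, upon termination of the simulation, compares the output to the hard-coded constant $k$ and enters an explicit infinite loop if they differ. Since this transformation is an effective syntactic operation on the code of $P_{i}$, and since the canonical lexicographic enumeration $(P_{i}|i\in\omega)$ is $ITRM$-decidable (in fact primitive recursively so), the function $(i,j,k)\mapsto s(i,j,k)$ is $ITRM$-computable without any oracle.

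Putting this together, after confirming $P_{i}(j)\downarrow$ the program $P$ iterates $k=0,1,2,\ldots$: at each stage it computes $m=s(i,j,k)$ and reads the bit of $h$ coding whether $P_{m}(0)\downarrow$; as soon as this bit is $1$, it halts with output $k+1$. Since the true output $k_{0}$ is unique and must appear in the enumeration, the loop terminates after at most $k_{0}+1$ iterations, and Theorem \ref{hp} guarantees that each query phase fits inside the $ITRM$ running-time budget. The main potential obstacle is producing the padding function $s$ as a genuine $ITRM$-computable operation; however, $P$ never actually executes $P_{m}$ but only queries $h$ about it, so the extra registers demanded by the embedded simulation of $P_{i}$ are a purely syntactic bookkeeping matter at the level of the code of $P_{m}$ and impose no register requirement on $P$ itself.
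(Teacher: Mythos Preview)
Your proof is correct and follows essentially the same strategy as the paper: first consult $h$ to detect divergence, then iterate over candidate outputs $k$, using an s-m-n--style index function to reduce the question ``does $P_i(j)=k$?'' to a single bit of $h$. The only minor difference is that the paper's auxiliary program $P_l$ additionally invokes the bounded halting-problem solver of Theorem~\ref{hp} internally before simulating $P_i(j)$, whereas your $P_m$ simulates directly; your simpler construction works just as well, and your explicit remark that the extra registers needed for the embedded simulation live only in the \emph{code} of $P_m$ (never in $P$ itself) is a useful clarification the paper leaves implicit.
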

\begin{proof}
 $P$ works as follows: Given $i$ and $j$, first use $h$ to check whether $P_{i}(j)\downarrow$. If $P_{i}(j)\uparrow$, $P$ returns $0$. Otherwise, we carry out the following procedure for each $k\in\omega$:
Compute (which can be done with a standard register machine, in fact) an index $l$ such that $P_{l}\downarrow$ iff $P_{i}(j)\downarrow=k$. $P_{l}$ will use a halting problem solver for $P_{i}$ (which can be easily
obtained from $P_{i}$), i.e. a sub-program $Q$ such that $Q(j)\downarrow=1$ iff $P_{i}(j)\downarrow$ and $Q(j)\downarrow=0$, otherwise. If it turns out that $Q(j)=0$, then $P_{l}$ enters an infinite loop.
Otherwise, we wait until $P_{i}(j)$ has stopped and check whether the outcome is $k$. If it is, we stop, otherwise we enter an infinite loop. (Note that $P$ is not required to do all this; it is only required that $P$
can compute a code for a program that does this, which is in fact easy).\\
Using $l$ and $h$, we can easily check whether $P_{i}(j)\downarrow=k$. If so, we return $k+1$. Otherwise, we continue with $k+1$.\\
As $P_{i}(j)\downarrow$ is already clear at this point, this has to lead to the value of $P_{i}(j)$ after finitely many steps.
\end{proof}

The next step is that, using $h$, a code for $L_{\omega_{i}^{CK}}$ can be computed uniformly in $i$.

\begin{corollary}
There is an $ITRM$-program $Q$ such that, for every $i\in\omega$, $Q^{h}(i)$ computes a code for $L_{\omega_{i}^{CK}}$. (I.e.: $Q^{h}(n)$ halts for every $n\in\omega$ and $\{j\in\omega|Q^{h}(p(i,j)\downarrow=1\}$ will be a code
for $L_{\omega_{i}^{CK}}$.)
\end{corollary}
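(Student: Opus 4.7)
The plan is to leverage the universal $ITRM$-simulator from the preceding lemma. By Theorem \ref{relITRM}, a code for $L_{\omega_{i}^{CK}}$ lies in $L_{\omega_{\omega}^{CK}}$ and is therefore $ITRM$-computable from the empty oracle; hence for each $i\in\omega$ there exists a program $P_{n(i)}$ that outputs a code for $L_{\omega_{i}^{CK}}$. The first key step is to verify that the index $n(i)$ can be arranged to depend effectively on $i$. This is done by describing a uniform template which, given $i$, produces a program $P_{n(i)}$ with roughly $i+c$ registers (for some fixed constant $c$) that, following the proof of Theorem \ref{relITRM}, uses nested bounded halting-problem solvers (cf.\ Theorem \ref{hp}) to climb through the admissibles $\omega_{1}^{CK}<\dots<\omega_{i}^{CK}$ and write down a code for $L_{\omega_{i}^{CK}}$ bit by bit. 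The resulting map $i\mapsto n(i)$ is primitive recursive and in particular $ITRM$-computable.

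Given the function $n$, the program $Q$ proceeds as follows. On input of the form $p(i,j)$ in its first register, it first computes $n(i)$ with a handful of auxiliary registers; it then invokes the universal program from the previous lemma, with oracle $h$, on input $p(n(i),j)$. By that lemma, this call halts and delivers either $P_{n(i)}(j)+1$ or $0$ depending on whether $P_{n(i)}(j)$ converges, from which we read off the $j$-th bit of the intended code and return it. Thus $Q^{h}(p(i,j))\downarrow$ for every $(i,j)\in\omega^{2}$, and $\{j\in\omega\mid Q^{h}(p(i,j))\downarrow=1\}$ is the code for $L_{\omega_{i}^{CK}}$ produced by $P_{n(i)}$, as required.

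The principal obstacle is the first step: one must convince oneself that the construction establishing $L_{\omega_{\omega}^{CK}}$-computability in Theorem \ref{relITRM} is genuinely uniform in $i$, so that the program $P_{n(i)}$ computing a code for $L_{\omega_{i}^{CK}}$ can be described by a syntactic transformation whose G\"odel number is a recursive function of $i$. This amounts to inspecting how the halting-problem hierarchy is built up level by level in the proof of Theorem \ref{relITRM} and bookkeeping the growth of the register count with $i$, but it introduces no new conceptual difficulty beyond what is already encapsulated in Theorems \ref{hp} and \ref{relITRM}.
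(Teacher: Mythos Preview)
Your argument is correct, but the paper takes a different and arguably cleaner route that avoids precisely the ``principal obstacle'' you identify. Rather than building an effective map $i\mapsto n(i)$ to a program computing a code for $L_{\omega_{i}^{CK}}$, the paper first observes that codes for $L_{\omega_{i}^{CK}}$ are uniformly \emph{recognizable} in $i$: there is a single program $R$ (with a fixed number of registers) such that $R^{x}(i)\downarrow=1$ iff $x$ codes $L_{\omega_{i}^{CK}}$, obtained by combining the well-foundedness checker with the first-order truth evaluator for $KP+V=L+$``there are exactly $i-1$ admissibles''. With this recognizer in hand, the program $Q$ simply searches through all indices $k$, using $h$ and the universal simulator to test whether $P_{k}$ is total and whether its output is accepted by $R(\cdot,i)$; the least such $k$ is then evaluated via the universal simulator.

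The trade-off is this: your approach needs the construction behind Theorem~\ref{relITRM} to be syntactically uniform in $i$ (which it is, but verifying this requires unpacking the nested halting-problem solvers and tracking register growth), whereas the paper only needs the bare \emph{existence} of some program computing a code for each $L_{\omega_{i}^{CK}}$, together with the much easier uniform recognizability. In effect, the paper replaces a uniform construction by a non-uniform existence statement plus a search, which is a standard manoeuvre but worth noting since it dissolves the very difficulty you single out.
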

\begin{proof}
First note that codes for $L_{\omega_{i}^{CK}}$ are uniformly recognizable in $i$, i.e. there is a program $R$ such that, for every $i\in\omega$, $x\subseteq\omega$, $R^{x}(i)\downarrow=1$ iff
$x$ codes $L_{\omega_{i}^{CK}}$ and otherwise $R^{x}(i)\downarrow=0$. This can be obtained using the well-foundedness checker combined with the first-order checker described in [LoMe] for
$V=L+KP+$'There are exactly $i-1$ admissible ordinals'.\\
Using $h$, we can now run through $\omega$, first testing whether $P_{k}(j)$ will halt for each $j\in\omega$ and then, using $P$ from the last lemma, whether $P_{k}$ will compute a code for $L_{\omega_{i}^{CK}}$. (We can evaluate
$P_{k}(j)$ for every $j$ using $P$ from the last lemma and then use $R$ to recognize whether the computed number is a code.)\\
As $L_{\omega_{i}^{CK}}$ has $ITRM$-computable codes, the minimal index $l$ such that $P_{l}$ computes a code for $L_{\omega_{i}^{CK}}$ will eventually be found in this way.\\
After that, we can, again using $P$ from the last lemma, evaluate $l$ to compute the desired code.
\end{proof}

These bits can now be put together to form a code for $L_{\omega_{\omega}^{CK}}$. This code will be a bit different from the codes considered so far, as we allow one element of the coded structure to be represented by arbitrary many
elements of $\omega$.\\

\begin{defini}
 Let $(X,\in)$ be a transitive $\in$-structure. Furthermore, let $f:\omega\rightarrow X$ be surjective. Then $\{p(i,j)|f(i)\in f(j)\}$ is called an odd code for $(X,\in)$.
\end{defini}

Odd codes can be evaluated in the same way that the codes we used so far could. The possibility of elements appearing repeatedly hinders none of those methods. It is helpful, however, to note that the equality is computable:\\

\begin{prop}
There is an $ITRM$-program $\bar{T}$ such that, for every odd code $x$ for a transitive $\in$-structure $(X,\in)$ (with associated function $f:\omega\rightarrow X$) and all $i,j\in\omega$, $\bar{T}^{x}(p(i,j))\downarrow=1$ iff $f(i)=f(j)$ and
$T^{x}\downarrow=0$, otherwise.\\
Furthermore, there is an $ITRM$-program $T$ such that, for every two odd codes $x$ and $y$ for transitive $\in$-structures $(X,\in)$ and $(Y,\in)$ (with associated functions $f_1$ and $f_2$), $T^{x\oplus y}(p(i,j))\downarrow=1$ iff
$f_{1}(i)=f_{2}(j)$ and $T^{x\oplus y}(p(i,j))\downarrow=0$, otherwise.
\end{prop}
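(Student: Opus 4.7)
The plan is to reduce $\bar{T}$ to an $\in$-extensionality check against the oracle and to reduce $T$ to a well-founded recursion that is evaluated using the coded-structure truth-predicate machinery from \cite{ITRM}.

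For $\bar{T}$: since $(X,\in)$ is transitive and $f$ is surjective, every element of $f(i)$ is of the form $f(k)$, and by the very definition of an odd code $f(k)\in f(i)\iff p(k,i)\in x$. Hence
\[
f(i)=f(j)\quad\iff\quad\forall k\in\omega\,\bigl(p(k,i)\in x\leftrightarrow p(k,j)\in x\bigr),
\]
which is a $\Pi^0_1$ predicate of the oracle, uniform in $(i,j)$. I would decide it with the standard flag-register template: step a counter $k$ through $\omega$, keep a register $F$ set to $1$ the first time the two oracle bits disagree, and rely on the $\liminf$ rule at stage $\omega$ so that the final value of $F$ witnesses the universal quantifier. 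The bounded halting-problem solver of Theorem~\ref{hp} then allows the outer program to recognise completion of the inner loop and output the complement of $F$.

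For $T$: the natural characterisation of cross-code equality is the recursion
\[
f_1(i)=f_2(j)\iff\bigl[\forall k\,(p(k,i)\in x\to\exists k'\,(p(k',j)\in y\wedge f_1(k)=f_2(k')))\bigr]\wedge[\text{sym.}],
\]
which is well-founded because $\in$ is well-founded on $X\cup Y$. I would evaluate it by interleaving $f_1,f_2$ into $g:\omega\to X\cup Y$ with $g(2m)=f_1(m)$, $g(2m+1)=f_2(m)$, and then constructing an odd code $z$ for the transitive set $(X\cup Y,\in)$ on the oracle $x\oplus y$. The same-parity $\in$-bits of $z$ are immediate from $x$ or $y$; the mixed-parity bits coincide with the cross-equality predicate we wish to compute. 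I would therefore build $z$ by an $ITRM$-iteration of the defining operator on $\omega\times\omega$ (stage $0$ collects pairs with empty extension on both sides; stage $\alpha+1$ adds pairs whose required witnesses already appear), which stabilises at a countable ordinal bounded by the rank of $X\cup Y$. Once $z$ is in hand, $T^{x\oplus y}(p(i,j))$ is computed as $\bar{T}^{z}(p(2i,2j+1))$.

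The main obstacle will be the faithful execution of this ordinal iteration on a machine with finitely many registers and the uniform detection of its stabilisation. This is exactly the scenario the truth-predicate techniques of the last section of \cite{ITRM} were designed to address: they convert the recursion into a uniformly $ITRM$-decidable query on the joint coded structure, and together with the absoluteness of well-founded $\in$-recursion they yield both programs with the stated input-output behaviour.
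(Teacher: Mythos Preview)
The paper's own proof is the single sentence ``An easy application of the techniques developed in \cite{ITRM}'', so your proposal supplies precisely the details the paper omits, and your route via extensionality for $\bar T$ and well-founded $\in$-recursion for $T$ is the natural one. Two small points. First, the mixed-parity bits of your interleaved code $z$ record cross-\emph{membership} $f_1(m)\in f_2(n)$, not cross-equality; since the two are mutually reducible through exactly the recursion you wrote down this does no damage, but it makes the final call to $\bar T^{z}$ redundant---the iteration already yields the equality relation directly. Second, the stabilisation ordinal of your iteration is the $\in$-rank of $X\cup Y$, which need not lie below $\omega_{\omega}^{CK,x\oplus y}$, so a naive transfinite loop is not obviously within $ITRM$ reach; the reason the predicate is nonetheless $ITRM$-decidable is that equality between elements of coded well-founded structures is $\Delta^{1}_{1}$ in the codes (for instance $\Sigma^{1}_{1}$ via the existence of a bisimulation and $\Pi^{1}_{1}$ via the complementary inductive definition of inequality), hence already hyperarithmetic in $x\oplus y$. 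Your closing appeal to the truth-predicate machinery of \cite{ITRM} is exactly what absorbs this point, so the argument goes through.
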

\begin{proof}
 An easy application of the techniques developed in \cite{ITRM}.
\end{proof}

\begin{lemma}
There is an $ITRM$-program $S$ such that $S^{h}$ computes an odd code for $L_{\omega_{\omega}^{CK}}$.
\end{lemma}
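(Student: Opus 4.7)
The plan is to glue the uniform codes for $L_{\omega_{i}^{CK}}$ provided by the preceding corollary into a single odd code for $L_{\omega_{\omega}^{CK}}=\bigcup_{i\in\omega}L_{\omega_{i}^{CK}}$, exploiting the fact that odd codes allow each set to be represented by several naturals. Concretely, for each $i\in\omega$ let $c_{i}$ be the code for $L_{\omega_{i}^{CK}}$ produced by $Q^{h}(i)$, and let $g_{i}\colon\omega\to L_{\omega_{i}^{CK}}$ be the associated surjection. Declare $p(i,n)$ to represent the element $g_{i}(n)$; the resulting odd code for $L_{\omega_{\omega}^{CK}}$ is then
\[
c \;=\; \bigl\{\, p\bigl(p(i,n),p(j,m)\bigr) \;\big|\; g_{i}(n)\in g_{j}(m)\,\bigr\}.
\]

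The program $S$ on oracle $h$ and input $N\in\omega$ proceeds as follows. Decode $N=p(p(i,n),p(j,m))$ (if $N$ is not of this form, output $0$). Set $k:=\max(i,j)+1$. I would not materialize the whole code $c_{k}$; instead, every time a bit of $c_{k}$ is needed, $S$ invokes $Q^{h}$ to produce that bit, and similarly for $c_{i}$ and $c_{j}$. Using the equality checker $T$ from the preceding proposition on the (simulated) oracle $c_{i}\oplus c_{k}$, search $n^{*}\in\omega$ with $T^{c_{i}\oplus c_{k}}(p(n,n^{*}))\downarrow=1$; such an $n^{*}$ exists because $L_{\omega_{i}^{CK}}\subseteq L_{\omega_{k}^{CK}}$, so $g_{i}(n)$ lies in the range of $g_{k}$. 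Symmetrically, search $m^{*}$ with $T^{c_{j}\oplus c_{k}}(p(m,m^{*}))\downarrow=1$. Finally, query the bit $p(n^{*},m^{*})$ of $c_{k}$ and output its value. Correctness is immediate: $g_{i}(n)\in g_{j}(m)$ iff $g_{k}(n^{*})\in g_{k}(m^{*})$ iff $p(n^{*},m^{*})\in c_{k}$, and the latter holds iff $Q^{h}(p(k,p(n^{*},m^{*})))\downarrow=1$.

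Halting on every input $N$ is what must be checked carefully. Each call to $Q^{h}$ halts by the preceding corollary; each invocation of $T$ halts by the preceding proposition (regular codes are odd codes with bijective $g$, so $T$ applies to the $c_{\ell}$); and each of the two unbounded searches terminates because the required $n^{*},m^{*}$ actually exist. Since these are the only potentially non-terminating ingredients and none is nested inside a loop of unknown length (the structure is: two sequential searches followed by a single bit query), $S^{h}(N)$ halts for every $N$.

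The main obstacle is the bookkeeping required to turn the oracle-based programs $Q$ and $T$ into subroutines of a single $ITRM$-program: one has to simulate oracle queries to $c_{i}$, $c_{j}$, $c_{k}$ (and to the disjoint unions $c_{i}\oplus c_{k}$, $c_{j}\oplus c_{k}$) by calls to $Q^{h}$, while reusing the fixed register set of $S$. This is the same kind of composition-of-oracle-computations manoeuvre used throughout \cite{ITRM} (and underlying the halting-problem based universal machine of the first lemma of this section), so no genuinely new idea is needed, only a careful register allocation and the standard observation that $ITRM$-computability is closed under composition when the inner procedures halt on all inputs.
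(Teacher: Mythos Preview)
Your proof is correct and follows exactly the approach the paper sketches: partition $\omega$ into portions indexed by $i$ (you use $\{p(i,n):n\in\omega\}$), place the code $c_{i}$ for $L_{\omega_{i}^{CK}}$ computed by $Q^{h}$ into the $i$-th portion, and then use the equality checker $T$ to relate elements across portions. The paper's proof is a two-line sketch; your write-up simply fleshes out the details (in particular the move to a common level $L_{\omega_{k}^{CK}}$ with $k>\max(i,j)$ to decide membership), and the correctness and halting analyses you give are sound.
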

\begin{proof}
Basically we reserve $\omega$ bits for coding $L_{\omega_{i}^{CK}}$; in one portion (the $i$-th portion), we use $Q^{h}$ to compute a code for $L_{\omega_{i}^{CK}}$. Then we use $T$ from the last proposition to relate the portions.
\end{proof}

\begin{thm}
 Let $h:=\{i\in\omega|P_{i}\downarrow\}$ be the set of indices of halting $ITRM$-programs in the canonical enumeration of programs. Then $h\in RECOG$.
\end{thm}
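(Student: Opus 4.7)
The plan is to recognize $h$ by running the program $S$ from the previous lemma using $x$ as the oracle in place of $h$, verifying that the output is genuinely an odd code for $L_{\omega_\omega^{CK}}$, and then checking that $x$ agrees bit-for-bit with the halting predicate read off from that code. The soundness of the last step is underwritten by Theorem \ref{hp}: since every halting $ITRM$-program in the empty oracle stops below $\omega_\omega^{CK}$, the predicate ``$P_i\downarrow$'' is absolute between $V$ and $L_{\omega_\omega^{CK}}$, so the halting predicate extracted from any correct code for $L_{\omega_\omega^{CK}}$ must be exactly $h$.

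More concretely, the recognition program $R$ with oracle $x$ proceeds in three stages. \emph{Stage 1 (pre-test).} Using the uniform halting-problem solver from Theorem \ref{hp} applied to $S$, test whether $S^x(n)\downarrow$ for every $n\in\omega$. If not, output $0$, since $S^h$ halts on all inputs by the previous lemma. Otherwise, denote the resulting real by $c$. \emph{Stage 2 (verification of $c$).} Using the truth-evaluation techniques for (odd) coded structures from the last section of \cite{ITRM}, check that $c$ is well-founded, extensional, codes a transitive $\in$-structure satisfying $V=L$, and that its ordinal height is exactly $\omega_\omega^{CK}$ (e.g.\ by verifying ``there are exactly $\omega$ admissibles and their supremum is the class of ordinals''). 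If any of these first-order checks fails, output $0$. \emph{Stage 3 (comparison).} Once $c$ is certified to code $L_{\omega_\omega^{CK}}$, the halting predicate $h' := \{i\in\omega: L_{\omega_\omega^{CK}}\models P_i\downarrow\}$ is $ITRM$-computable from $c$ bitwise; by the absoluteness argument above, $h'=h$. It remains to decide $x=h'$.

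The main obstacle is precisely this last equality test, which is a comparison of $\omega$ many bits and cannot be completed by naive enumeration. We overcome it with the standard halting-problem-solver trick: design an auxiliary program $U$ that, with oracle $x\oplus c$, iterates $i$ through $\omega$, computes the $i$-th bit of $h'$ from $c$, compares it with $x(i)$, halts if and only if a mismatch is ever found, and otherwise runs forever. Then apply the halting-problem solver of Theorem \ref{hp} to $U$ on oracle $x\oplus c$: if $U^{x\oplus c}\downarrow$ a mismatch exists, so output $0$; if $U^{x\oplus c}\uparrow$ we have $x=h'=h$, so output $1$.

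Correctness is then immediate: if $x=h$ every stage succeeds and $R^x\downarrow=1$; if $x\neq h$, then either $S^x$ fails the pre-test, or $c$ fails verification, or $c$ correctly codes $L_{\omega_\omega^{CK}}$ (possibly by accident) and Stage~3 detects a disagreement between $x$ and the true $h$, forcing output $0$.
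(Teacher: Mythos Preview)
Your proposal is correct and follows essentially the same approach as the paper: run $S$ on the oracle $x$, verify that the result is a genuine odd code for $L_{\omega_\omega^{CK}}$, then compare $x$ bitwise against the halting predicate extracted from that code. Your write-up is in fact more careful than the paper's, which leaves the halting-test for $S^x$, the precise first-order characterization of $L_{\omega_\omega^{CK}}$, and the mechanism for the infinite bitwise comparison implicit; your explicit use of the bounded-halting-problem solver in Stages~1 and~3 fills in exactly the details the paper omits.
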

\begin{proof}
Let $x$ be the real in the oracle. Check whether $S^{x}$ computes an odd code $c$ for $L_{\omega_{\omega}^{CK}}$. If not, return $0$. 
Checking whether certain programs halt amounts to checking whether certain first-order statements hold in $L_{\omega_{\omega}^{CK}}$, which can be done using $c$. Then compare the results with $x$.
This identifies $h$.
\end{proof}

This idea should generalize to yield that also $h^{h}:=\{i\in\omega|P_{i}^{h}\downarrow\}$ is recognizable. More generally, let $h_0 :=h$, $h_{i+1}:=\{j\in\omega|P_{j}^{h_{i}}\downarrow\}$, then
this should show that $h_{i}\in RECOG$ for every $i\in\omega$. Uniformising this might even lead higher up, e.g. $h_{\omega}:=\{p(i,j)|i\in\omega\wedge j\in\omega\wedge i\in h_{j}\}\in RECOG$.
What is the first $\alpha$ such that $h_{\alpha}\notin RECOG$?\\

\textbf{Questions}: Is there a non-computable real $x$ such that $h$ does not reduce to $x$? If so, is there such a real which is not recognizable? Generally: How do computability degrees relate to recognizability?

\subsection{Optimal results on the distribution of recognizables}

We saw above (via Jensen-Karp) that $x\in RECOG$ implies that $x\in L_{\omega_{\omega}^{CK,x}}$. Reals without this property are hence ruled out, we concentrate on those that have it.

\begin{defini}
 $x\subseteq\omega$ is potentially recognizable iff $x\in L_{\omega_{\omega}^{CK,x}}$. We denote the set of potentially recognizable reals by $PRECOG$.
\end{defini}

\begin{thm}{\label{allornothing}}
 Let $\gamma$ be an index. Then either all potentially recognizable elements of $L_{\gamma+1}-L_{\gamma}$ are recognizable or none is.
\end{thm}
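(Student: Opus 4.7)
Fix an index $\gamma$ and suppose $x \in L_{\gamma+1}-L_{\gamma}$ is recognized by some program $P_{x}$. Let $y \in L_{\gamma+1}-L_{\gamma}$ be any potentially recognizable real; the plan is to exhibit $y$ as the $<_{L}$-minimum of an $ITRM$-decidable set $A$ all of whose members are potentially recognizable, so that the earlier lemma on $<_{L}$-minima of such sets immediately delivers recognizability of $y$.

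The key preliminary is that for every potentially recognizable $r \in L_{\gamma+1}-L_{\gamma}$, the ordinal $\omega_{\omega}^{CK,r}$ depends only on $\gamma$, equalling the $\omega$-th admissible $\Omega$ above $\gamma$ in pure $L$. Indeed, for $\alpha > \gamma$ we have $r \in L_{\alpha}$, so the predicate for $r$ becomes $\Delta_{0}$-definable in a parameter and $r$-admissibility above $\gamma$ coincides with ordinary admissibility; potential recognizability forces $\omega_{\omega}^{CK,r} > \gamma$, hence only finitely many of the $\omega_{i}^{CK,r}$ can lie at or below $\gamma$, and the cofinal tail then enumerates exactly the admissibles above $\gamma$. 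In particular $\omega_{\omega}^{CK,x} = \omega_{\omega}^{CK,y} = \Omega$.

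Let $i_{x}, i_{y} \in \omega$ be the $<_{L}$-ranks of $x$ and $y$ among the countably many reals in $L_{\gamma+1}-L_{\gamma}$ (countability being forced by the existence of the recognizable $x$, together with the earlier inclusion $RECOG \subseteq L_{\sigma}$). I would then define $A$ to consist of those $z\subseteq\omega$ that are potentially recognizable and, writing $\beta$ for the least ordinal with $z \in L_{\beta+1}-L_{\beta}$, satisfy $P_{x}^{x^{*}}\downarrow=1$ for $x^{*}$ the $i_{x}$-th real of $L_{\beta+1}-L_{\beta}$ and that $z$ itself has $<_{L}$-rank $i_{y}$ in $L_{\beta+1}-L_{\beta}$. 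The recognizer property of $P_{x}$ forces $x^{*} = x$ and hence $\beta = \gamma$; combined with the rank condition this pins $z = y$, so $A = \{y\}$ and every member of $A$ is potentially recognizable, as required by the lemma's hypothesis.

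The main obstacle is verifying that $A$ is $ITRM$-decidable. The intended procedure, on oracle $z$, first computes an odd code $c$ for $L_{\omega_{\omega}^{CK,z}}[z]$ by relativizing the code-construction techniques of the preceding section via Theorem \ref{relITRM}. Next it searches for $z$ among the pure-$L$ sets $L_{\omega_{\omega}^{CK,z}} \subseteq L_{\omega_{\omega}^{CK,z}}[z]$, rejecting if $z$ is absent; otherwise it extracts the $L$-stage $\beta$ of $z$, reads off the $i_{x}$-th real $x^{*}$ of $L_{\beta+1}-L_{\beta}$, and simulates $P_{x}^{x^{*}}$ using $z$ as oracle (which is possible because $x^{*}$ is $ITRM$-computable from $z$ by Theorem \ref{relITRM}). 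The delicate point is that the simulation must terminate within the resources of a $z$-oracle $ITRM$, and this is where the preliminary is indispensable: $x^{*}$ lives at the same index stage as $z$, so $\omega_{\omega}^{CK,x^{*}} = \omega_{\omega}^{CK,z}$, and Theorem \ref{hp} then bounds the halting time of $P_{x}^{x^{*}}$ below $\omega_{\omega}^{CK,z}$. A final rank comparison is a routine first-order check against $c$; applying the earlier lemma to the decidable set $A=\{y\}$ of potentially recognizable reals then completes the proof.
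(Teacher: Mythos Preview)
Your overall plan---anchor on the given recognizable real, reduce to deciding a singleton $A=\{y\}$, then invoke the $<_{L}$-minimum lemma---is sound and close in spirit to the paper's argument. But the decision procedure you propose for $A$ has a genuine gap at its very first step.

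You ask an $ITRM$ with oracle $z$ to compute an odd code $c$ for $L_{\omega_{\omega}^{CK,z}}[z]$. This cannot be done. By Theorem~\ref{relITRM} the reals $ITRM$-computable from $z$ are exactly those in $L_{\omega_{\omega}^{CK,z}}[z]$, and no odd code for that structure lives inside it: from such a code one can extract, uniformly in $n$, an (odd) code for the $n$th admissible level $L_{\omega_{n}^{CK,z}}[z]$, hence a real outside $L_{\omega_{n}^{CK,z}}[z]$ for every $n$; but any fixed program has some register bound $N$ and therefore only outputs reals in $L_{\omega_{N+1}^{CK,z}+1}[z]$. The ``relativization of the preceding section'' does not rescue this, because the construction $S^{h}$ there uses the halting number $h$ as oracle; its relativization would require $h^{z}$, not $z$. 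For $z$ in $PRECOG$---exactly the case you care about---$L_{\omega_{\omega}^{CK,z}}[z]=L_{\omega_{\omega}^{CK,z}}$ and the obstruction is the familiar one that this model does not code itself.

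The paper sidesteps this entirely: rather than coding the unbounded structure $L_{\omega_{\omega}^{CK,z}}$, it fixes in advance a program $P$ (depending on the target real) which, when fed the target as oracle, outputs the $<_{L}$-minimal code of the single bounded level $L_{\gamma+1}$. Since $P$ has a fixed register count, the bounded halting-problem solver (Theorem~\ref{hp}) lets one check, for arbitrary oracle $y$, whether $P^{y}$ halts and outputs the minimal code of a minimal $L$-level containing $y$; the recognizer $Q$ for the anchor real then certifies that this level is exactly $L_{\gamma+1}$, after which one compares $y$ against the hard-coded index of the target inside $cc(L_{\gamma+1})$. Your argument is easily repaired along the same lines: replace your step~1 by running such a fixed $P$ and checking its output, and the rest of your rank-based identification goes through.

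A smaller point: at the ``delicate point'' you invoke your preliminary to conclude $\omega_{\omega}^{CK,x^{*}}=\omega_{\omega}^{CK,z}$, but that preliminary needs $x^{*}$ to be potentially recognizable, which is not guaranteed for arbitrary $z$. What you actually need is only the inequality $\omega_{\omega}^{CK,x^{*}}\leq\omega_{\omega}^{CK,z}$, and this follows directly from $x^{*}\in L_{\omega_{\omega}^{CK,z}}$ (cf.\ the argument in the corollary following Theorem~\ref{char}), with no $PRECOG$ hypothesis on $x^{*}$.
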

\begin{proof}
 (Sketch) Suppose $a\in (L_{\gamma+1}-L_{\gamma})\cap RECOG$ and $x\in (L_{\gamma+1}-L_{\gamma})\cap PRECOG$. We want to show that $x\in RECOG$. Pick a program $Q$ that recognizes $a$. As $x\in PRECOG$, there is $i\in\omega$ such that $x\in L_{\omega_{i}^{CK,x}}$.
In particular, we have $L_{\gamma+1}\in L_{\omega_{i}^{CK,x}}$. Hence $c=cc(L_{\gamma+1})$, the $<_{L}$-minimal real code for $L_{\gamma+1}$ is computable from $x$. Let $P$ be a program that computes $c$ from $x$.\\
To identify whether $y=x$ (with $y$ in the oracle), we first use the halting problem solver for $P$ to check whether $P^{y}(i)\downarrow$ for all $i\in\omega$. If not, then $y\neq x$. If yes, we check whether
$P^{y}$ computes a code $d$ for an $L$-level containing $y$. If not, then $y\neq x$. If yes, we use the technique from the proof of the Lost Melody Theorem to check whether $d$ is $<_{L}$-minimal with that property.
If not, then $y\neq x$. If yes, we check whether the structure coded by $d$ contains a real $r$ such that $Q^{r}\downarrow=1$. This can be done using the halting problem solver for $Q$. If there is no such $r$, then 
$y\neq x$. If there is, we check whether the structure coded by $d$ contains an $L$-level that also contains $r$ (this checks the minimality of $\gamma$). If not, then $y\neq x$, otherwise, $y=x$. So this procedure recognizes $x$, hence $x\in RECOG$.
\end{proof}

\begin{thm}{\label{char}}
Let $x\in PRECOG$. Then $x\in RECOG$ iff there exists a $\Sigma_{1}$-formula $\phi$ of set theory without parameters such that $x$ is the unique witness for $\phi(v)$ in $L_{\omega_{\omega}^{CK,x}}$.
\end{thm}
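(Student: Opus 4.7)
The plan is to prove both directions of the equivalence separately, handling the forward direction by a direct translation of a recognizing program into a $\Sigma_1$-formula and the backward direction by a more involved construction that invokes the halting-number machinery of the previous section.

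For $(\Rightarrow)$, assume $P$ is a program recognizing $x$, and take $\phi(v)$ to be the $\Sigma_1$ set-theoretic statement asserting the existence of a set $c$ coding the computation of $P^v$ and ending with output $1$, exactly as in the proof of Theorem \ref{largeCK}. This formula has $P$ encoded as a specific natural number and is hence genuinely parameter-free. Since $x\in PRECOG$ and Theorem \ref{hp} places the halting time of $P^x$ strictly below $\omega_\omega^{CK,x}$, the computation code lives in $L_{\omega_\omega^{CK,x}}[x]=L_{\omega_\omega^{CK,x}}$, so $\phi(x)$ holds there. Uniqueness follows from upward $\Sigma_1$-absoluteness: any other $z\in L_{\omega_\omega^{CK,x}}$ with $\phi(z)$ would satisfy $P^z\downarrow=1$ in $V$, forcing $z=x$ because $P$ recognizes $x$.

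For $(\Leftarrow)$, given $\phi$, I construct a recognizing program $R$ which, on oracle $y$, performs the following three steps. First, using the bounded halting-problem solver of Theorem \ref{hp} uniformly in $y$, compute the $y$-relativized halting number $h^y$, and then invoke the relativization of the halting-number section to produce an odd code $c$ for $L_{\omega_\omega^{CK,y}}[y]$. Second, using the coded-truth techniques of \cite{ITRM}, test whether the structure coded by $c$ satisfies $V=L$ (equivalently, whether there is an ordinal $\alpha$ in the coded structure with $y\in L_\alpha$); if not, then $y\notin PRECOG$ and in particular $y\neq x$, so return $0$. Third, if yes, then $c$ in fact codes $L_{\omega_\omega^{CK,y}}$ itself, and one checks via first-order queries in $c$ whether $y$ is the unique $v$ in the structure satisfying $\phi(v)$, returning $1$ iff so. Correctness: if $y=x$, all three steps succeed by the hypothesis on $\phi$ and $R$ outputs $1$; conversely, if $R$ outputs $1$ on $y$, then $y\in PRECOG$ and $y$ is the unique $\phi$-witness in $L_{\omega_\omega^{CK,y}}$. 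Since any two $L$-levels are comparable, one of $L_{\omega_\omega^{CK,x}},L_{\omega_\omega^{CK,y}}$ includes the other; in either case the element ($x$ or $y$) of the smaller level sits in the larger one, and upward $\Sigma_1$-absoluteness transports its satisfaction of $\phi$ to the larger level, where uniqueness forces $x=y$.

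The main obstacle I expect is Step~1 of the backward direction: rigorously relativizing the entire halting-number section of this paper to an arbitrary oracle $y$, so that the output is a code for $L_{\omega_\omega^{CK,y}}[y]$ uniformly in $y$. Each ingredient---the universal $ITRM$ obtained from $h$, the uniform code for $L_{\omega_i^{CK}}$ in $i$, and the odd-code glueing---should relativize without incident, but checking this in detail is where most of the routine work will lie. A secondary subtlety is giving a genuinely first-order test for ``$y\in L_{\omega_\omega^{CK,y}}$'' from the code $c$; this reduces to a search through the $L$-levels packaged inside $c$ and is handled by the truth-evaluation machinery already cited.
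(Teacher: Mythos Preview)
Your forward direction is correct and matches the paper's approach.

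The backward direction contains a genuine error in Step~1. You propose to compute $h^{y}$ from $y$ using the bounded halting-problem solver of Theorem~\ref{hp}, but this is impossible: that theorem only supplies, for each \emph{fixed} program $P$, a solver for the halting problem of $P$ uniformly in the oracle. There is no single $ITRM$ that decides $P_{i}^{y}\downarrow$ uniformly in both $i$ and $y$; if there were, taking $y=\emptyset$ would yield an $ITRM$ computing $h$, contradicting the fact that $h$ is a lost melody. Equivalently, an odd code for $L_{\omega_{\omega}^{CK,y}}[y]$ cannot lie in $L_{\omega_{\omega}^{CK,y}}[y]$, since it would encode a well-order of $\omega$ of type $\omega_{\omega}^{CK,y}$, which is ruled out by admissibility; by Theorem~\ref{relITRM} it is therefore not $ITRM$-computable in $y$. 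The relativization of the halting-number section you anticipate does go through, but its conclusion is only that such a code is computable \emph{from $h^{y}$}, not from $y$ alone --- useless for a recognizer whose sole oracle is $y$.

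The paper sidesteps this by never trying to code all of $L_{\omega_{\omega}^{CK,x}}$. Instead it fixes the least $\gamma<\omega_{\omega}^{CK,x}$ with $x\in L_{\gamma}$ and $L_{\gamma}\models\phi(x)$; because $\gamma$ is strictly below $\omega_{\omega}^{CK,x}$, the canonical code $cc(L_{\gamma})$ lies in $L_{\omega_{\omega}^{CK,x}}=L_{\omega_{\omega}^{CK,x}}[x]$ and is thus $ITRM$-computable from $x$ by some fixed program $P$. The recognizer then checks whether $P^{y}$ outputs the $<_{L}$-minimal code of an $L$-level containing $y$ in which $\phi(y)$ holds and below which it fails, using upward $\Sigma_{1}$-absoluteness exactly as in your final correctness argument. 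The key point you are missing is that one need only reach a \emph{bounded} $L$-stage, and the bound is carried implicitly by the hard-coded program $P$.
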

\begin{proof}
 (Sketch) If $x\in RECOG$ and $P$ recognizes $x$, then $P^{x}\downarrow=1$ is $\Sigma_{1}$-expressable over $L_{\omega_{\omega}^{CK,x}}$ (for $x\in PRECOG$).\\
On the other hand, if $x$ is definable as above, then let $L_{\gamma}$ be the first $L$-level containing $x$ such that $L_{\gamma}\models\phi(x)$. Then $\gamma<\omega_{\omega}^{CK,x}$,
so $c:=cc(L_{\gamma})$ can be computed from $x$, say by program $P$. Using $c$, we can check whether $L_{\gamma}\models\phi(x)$ holds.\\
Checking whether $y=x$ then works as follows: Check whether $P^{y}$ computes a minimal code for an $L$-level containing $y$, then check whether $\phi(y)$ holds in that $L$-level and then 
whether it fails in all earlier $L$-levels. If all of this holds, then $y=x$ (since $\Sigma_{1}$ is preserved upwards).
\end{proof}

\begin{corollary}
 For all $x\subseteq\omega$, $x$ is recognizable iff $x\in L_{\omega_{\omega}^{CK,x}}$ and $L_{\omega_{\omega}^{CK,x}}\models RECOG(x)$. In particular, if $L_{\alpha}\models ZF^{-}$ and $x\in\mathfrak{P}(\omega)\cap L_{\alpha}$,
then $x\in RECOG$ holds iff $L_{\alpha}\models RECOG(x)$.
\end{corollary}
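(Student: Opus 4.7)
The corollary is a direct packaging of Theorem \ref{largeCK} and Theorem \ref{char} together with basic absoluteness, so the plan is to verify the two equivalences in turn.

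For the first equivalence, fix $RECOG(x)$ to be the natural $\Sigma_{1}$-formalisation suggested by Theorem \ref{char}, namely ``there is a parameter-free $\Sigma_{1}$-formula $\phi(v)$ of set theory such that $x$ is the unique witness for $\phi(v)$ in the current universe''. This is $\Sigma_{1}$ over any transitive model containing $x$. Suppose first that $x\in RECOG$. By Theorem \ref{largeCK} we have $x\in L_{\omega_{\omega}^{CK,x}}$, and by Theorem \ref{char} there is a parameter-free $\Sigma_{1}$-formula $\phi$ whose unique witness in $L_{\omega_{\omega}^{CK,x}}$ is $x$; this is exactly what $L_{\omega_{\omega}^{CK,x}}\models RECOG(x)$ says. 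Conversely, if $x\in L_{\omega_{\omega}^{CK,x}}$ (so $x\in PRECOG$) and $L_{\omega_{\omega}^{CK,x}}\models RECOG(x)$, then some $\Sigma_{1}$-formula $\phi$ has $x$ as unique witness in $L_{\omega_{\omega}^{CK,x}}$; Theorem \ref{char}, applied in the reverse direction, then yields $x\in RECOG$.

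For the ``in particular'' part, let $L_{\alpha}\models ZF^{-}$ and $x\in\mathfrak{P}(\omega)\cap L_{\alpha}$. The plan is to show that the first equivalence reflects into $L_{\alpha}$. Since $L_{\alpha}\models ZF^{-}$, $L_{\alpha}$ is in particular an $\omega$-model of $KP$ containing $x$, so by Lemma \ref{relbar} (or its straightforward iterate) the first $\omega$ many $x$-admissibles are correctly computed inside $L_{\alpha}$, i.e.\ $\omega_{\omega}^{CK,x}$ is absolute between $V$ and $L_{\alpha}$, and $L_{\omega_{\omega}^{CK,x}}\in L_{\alpha}$. The conditions ``$x\in L_{\omega_{\omega}^{CK,x}}$'' and ``$L_{\omega_{\omega}^{CK,x}}\models RECOG(x)$'' are therefore both absolute between $V$ and $L_{\alpha}$ (the latter because the satisfaction relation on the set $L_{\omega_{\omega}^{CK,x}}\in L_{\alpha}$ is $\Delta_{1}$ in $ZF^{-}$). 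Unwinding $RECOG(x)$ inside $L_{\alpha}$ reduces precisely to the conjunction on the right-hand side of the first equivalence, so $L_{\alpha}\models RECOG(x)$ iff $x\in RECOG$ by the first part.

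The only point that needs a touch of care is the interpretation of $RECOG(x)$ and the absoluteness of its verification; this is not a real obstacle once one commits to the $\Sigma_{1}$-definability description from Theorem \ref{char}, since the required $L$-level is an element of any model of $ZF^{-}$ containing $x$ and the satisfaction relation on it is absolute.
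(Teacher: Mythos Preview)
Your argument is formally coherent, but it rests on a reinterpretation of $RECOG(x)$ that is not the one the paper uses, and this changes the content of the corollary. In the paper, $RECOG(x)$ is the \emph{machine-theoretic} formula ``there is an $ITRM$-program $P$ such that $P^{x}\downarrow=1$ and $P^{z}\downarrow=0$ for all reals $z\neq x$''. Under your reading (``$x$ is a $\Sigma_{1}$-singleton in the ambient universe''), the first equivalence collapses to a restatement of Theorem~\ref{char} together with Theorem~\ref{largeCK}, which is why your proof is so short. Incidentally, your formula is not $\Sigma_{1}$: the uniqueness clause $\forall y(\phi(y)\rightarrow y=x)$ is $\Pi_{1}$ when $\phi$ is $\Sigma_{1}$.

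With the intended machine reading, both directions require real work that your proposal skips. For the forward direction, one must show that a program $P$ recognizing $x$ in $V$ still terminates on \emph{every} real $z\in L_{\omega_{\omega}^{CK,x}}$ inside $L_{\omega_{\omega}^{CK,x}}$; the paper uses Chong's bound $\omega_{\omega}^{CK,z}\leq\omega_{\omega}^{CK,x}$ for $z\in L_{\omega_{\omega}^{CK,x}}$ to guarantee this. For the backward direction, the hypothesis $L_{\omega_{\omega}^{CK,x}}\models RECOG(x)$ only yields a program $P$ that behaves correctly on reals \emph{in} $L_{\omega_{\omega}^{CK,x}}$; on outside reals $P$ may diverge or output $1$. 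The paper therefore builds a new recognizer: given $r$, check $P^{r}\downarrow=1$, compute the $<_{L}$-minimal code of the least $L$-level containing $r$, and verify there is no $y<_{L}r$ with $P^{y}\downarrow=1$. This step---turning a locally correct program into a globally correct one via $<_{L}$-minimality---is the substantive idea, and it is absent from your proof.

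For the ``in particular'' clause your absoluteness strategy (compute $\omega_{\omega}^{CK,x}$ correctly inside $L_{\alpha}$ and reflect the first equivalence) is reasonable and the paper does not spell this part out either; but note that to ``unwind $RECOG(x)$ inside $L_{\alpha}$'' you are implicitly applying the first equivalence \emph{internally}, which presupposes that its proof goes through in $ZF^{-}$---something you should state rather than leave tacit.
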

\begin{proof}
 Suppose first that $x\in RECOG$, and let $P$ be a program that recognizes $x$. Then $x\in L_{\omega_{\omega}^{CK,x}}$ by Theorem \ref{largeCK}. 
By \cite{Ch}, if $z\in L_{\gamma}$ and $\gamma^{+}$ is the smallest admissible ordinal greater than $\gamma$, then $\omega_{1}^{CK,z}\leq\gamma^{+}$.
Inductively, we get that $\omega_{i}^{CK,z}\leq\gamma^{+i}$, where $\gamma^{+i}$ is the $i$th admissible ordinal above $\gamma$. Inductively, it follows that $\omega_{\omega}^{CK,z}\leq\omega_{\omega}^{CK,x}$ for all $z\in L_{\omega_{\omega}^{CK,x}}$
when $x$ is such that $x\in L_{\omega_{\omega}^{CK,x}}$. This implies that $P^{z}$ stops after at most $\omega_{\omega}^{CK,x}$ many steps for all $z\in L_{\omega_{\omega}^{CK,x}}$ and hence that $P^{z}$ can be carried out inside
$L_{\omega_{\omega}^{CK,x}}$ for all $z\in L_{\omega_{\omega}^{CK,x}}$. Hence, since $P$ recognizes $x$, we have $L_{\omega_{\omega}^{CK,x}}\models P^{z}\downarrow=0$ for all $z\neq x$ and furthermore $L_{\omega_{\omega}^{CK,x}}\models P^{x}\downarrow=1$.
Hence $L_{\omega_{\omega}^{CK,x}}\models RECOG(x)$.\\
On the other hand, assume that $x\in L_{\omega_{\omega}^{CK,x}}$ and that $L_{\omega_{\omega}^{CK,x}}\models RECOG(x)$. Hence $P^{x}\downarrow=1$ and $P^{z}\downarrow=0$ for all $z<_{L}x$. Now let $Q$ be a program such that $Q^{x}$ computes
the $<_{L}$-minimal code of the first $L$-level containing $x$. Then $x$ can be recognized as follows: Given some real $r$ in the oracle, first check, using a halting problem solver for $P$, whether $P^{r}\downarrow=1$. If not, then $r\neq x$.
Otherwise check - using a halting problem solver for $Q$ - whether $Q^{r}(i)\downarrow$ for all $i\in\omega$.
If not, then $r\neq x$. If yes, check whether $Q^{r}$ codes a minimal $L$-level containing $r$. If not, then $r\neq x$. If yes, check whether $Q^{r}$ is $<_{L}$-minimal with this property, using the usual strategy. If not, then $r\neq x$.
Otherwise, use $Q^{r}$ (and the halting problem solver for $P$) to check whether there is any real $y<_{L}x$ such that $P^{y}\downarrow=1$. If that is the case, then $r\neq x$. If it isn't, then $r$ is $<_{L}$-minimal
with $P^{r}\downarrow=1$ and hence $r=x$.
\end{proof}

%

\begin{defini}
 $\alpha\in\omega_{1}$ is admissibly $\Sigma_{1}$-describable iff there exists a $\Sigma_{1}$-formula $\phi$ of set theory without parameters such that $cc(\alpha)$ is the unique witness for $\phi(v)$ in $L_{\omega_{\omega}^{CK,cc(\alpha)}}$.
If $\alpha$ is not admissibly $\Sigma_{1}$-describable, we call it admissibly $\Sigma_{1}$-indescribable.
\end{defini}

\begin{defini}
 A strong substantial gap is an ordinal interval $[\alpha,\beta]$ such that every $\gamma\in[\alpha,\beta]$ is an index and such that $L_{\beta+1}-L_{\alpha}$ contains no recognizables.
 A weak substantial gap is an ordinal interval $[\alpha,\beta]$ such that $\alpha$ is an index, the set of indices in that interval is unbounded in $\beta$ and such that $L_{\beta+1}-L_{\alpha}$ contains no recognizables
\end{defini}

We can now show that gaps in the recognizables are never short:

\begin{thm}
There are no strong substantial gaps of finite length. Furthermore, strong gaps always start with limit ordinals.
\end{thm}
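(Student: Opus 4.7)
I would prove both claims by contradiction. In each case, I suppose the existence of a strong gap $[\alpha,\beta]$ of the forbidden form and exhibit a recognizable real inside $L_{\beta+1}-L_\alpha$, contradicting the gap property. The main tools are Theorem \ref{char}, which characterizes recognizability of $x\in PRECOG$ via the existence of a parameter-free $\Sigma_1$-definition in $L_{\omega_\omega^{CK,x}}$, and Theorem \ref{allornothing}, which reduces the problem to finding one recognizable in any single slice $L_{\gamma+1}-L_\gamma$ of the gap (since then every potentially recognizable real of that slice is recognizable).

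The heart of the plan is a local lemma: whenever $\alpha=\delta+1$ is a successor ordinal that is an index, the $<_L$-least real $x$ in $L_{\alpha+1}-L_\alpha$ is recognizable. From this the second claim follows immediately, as a strong gap cannot then start at such an $\alpha$. For the first claim, a strong gap $[\alpha,\alpha+n]$ with $1\leq n<\omega$ contains the successor index $\alpha+1$, so the local lemma supplies a recognizable in $L_{\alpha+2}-L_{\alpha+1}\subseteq L_{\alpha+n+1}-L_\alpha$. The $n=0$ case of a single-point gap at a limit $\alpha$ is addressed by a companion variant for isolated indices: if $\alpha+1$ is not itself an index, then $L_{\alpha+1}$ is structurally distinguished and its $<_L$-least new real admits a parameter-free $\Sigma_1$-description via that distinguished status.

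To prove the local lemma, observe that by Jensen-style fine structure the $<_L$-least real $x\in L_{\alpha+1}-L_\alpha$ is essentially a master code of $L_\alpha$ over $\omega$ and is $\Sigma_1$-definable over $L_\alpha$; an admissibility argument as in Theorem \ref{largeCK} yields $x\in PRECOG$, so $\omega_\omega^{CK,x}$ is large enough to see $L_{\alpha+1}$. By Theorem \ref{char} it remains to supply a parameter-free $\Sigma_1$-definition of $x$ in $L_{\omega_\omega^{CK,x}}$. Since $\alpha=\delta+1$ is a successor, this reduces to $\Sigma_1$-describing $\delta$; I would obtain such a description by $<_L$-minimization, exploiting that $\Sigma_1$-statements and minima along $<_L$ are absolute upwards in $L$: $x$ is identified as the $<_L$-least real that arises as the $<_L$-least new real at some successor $L$-level of a fixed structural type.

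The principal obstacle I foresee is producing this $\Sigma_1$-description of $\delta$ uniformly across all successor indices, since pathological $\delta$ may resist a clean structural characterization. My intended workaround is to exploit Theorem \ref{allornothing}: it suffices to recognize \emph{some} $PRECOG$ real in the slice rather than the $<_L$-least one, which loosens the description burden. As a further fallback I would invoke the canonical recognizables already produced in this paper (the halting number $h$ and its relatives) as $\Sigma_1$-anchors for pinning down ordinals close to $\delta$.
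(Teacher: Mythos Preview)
Your central ``local lemma'' --- that for every successor index $\alpha=\delta+1$ the $<_L$-least real in $L_{\alpha+1}-L_\alpha$ is recognizable --- is too strong, and in fact false. If it held, no strong substantial gap could contain any successor ordinal at all, hence (since any interval of length $\geq 1$ contains successors) strong gaps would not exist; yet the paper goes on to characterise strong gaps via admissibly $\Sigma_1$-indescribable ordinals, and the earlier result cited from \cite{Ca} already produces arbitrarily long index-rich intervals free of recognizables. Your route to the lemma explicitly requires a parameter-free $\Sigma_1$-description of an arbitrary $\delta$, and you rightly flag this as the obstacle: for $\delta$ inside an actual gap no such description exists. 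The proposed workarounds do not help. Theorem \ref{allornothing} only lets you change \emph{which} real in the slice $L_{\alpha+1}-L_\alpha$ you try to recognize, but the obstruction is the level $\alpha$ itself, not the particular real; and the halting number $h$ anchors a single fixed $L$-level, not an arbitrary $\delta$.

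What you are missing is that in both parts the hypothesis already supplies the anchor you seek. For the first claim the paper does not argue about an arbitrary length-$i$ gap: it passes to the \emph{minimal} $\alpha$ such that $[\alpha,\alpha+i]$ is a strong substantial gap. This minimality is itself an $ITRM$-checkable property, so $cc(L_{\alpha+i})$ is recognizable, and one then pulls back (via the fixed index representing $cc(L_\alpha)$ inside that code) to recognize $cc(L_\alpha)\in L_{\alpha+1}-L_\alpha$, contradicting the gap. For the second claim, if $\alpha=\beta+1$ \emph{starts} a gap then by definition $\beta$ lies just outside it, so $L_{\beta+1}-L_\beta$ already contains a recognizable $r$; now $cc(L_\alpha)$ is recognized as the $<_L$-minimal code of the minimal $L$-level containing $r$. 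In both cases the describability comes from the extremality of the putative counterexample (least gap of a given length, or first point of a gap), not from any intrinsic structure of successor indices.
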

\begin{proof}
Assume for a contradiction that there is a strong substantial gap of length $i$, where $i\in\omega$. Let $\alpha\in On$ be minimal such that $[\alpha,\alpha+i]$ is a strong
substantial gap. It is easy to see that $cc(L_{\alpha+i})$ is recognizable by the usual arguments: Given $x$, check whether $x$ codes an $L$-level at which a strong substantial
gap of length $i$ ends. This can be done by the routines for evaluating truth predicates described in \cite{ITRM}. The minimality of $x$ can then also be checked by the techniques described
there. By the results on the computational strength of $ITRM$s, one readily obtains that from the $<_{L}$-minimal code $c$ of $L_{\alpha}$ which is not an element of $L_{\alpha}$, we can compute $cc(L_{\alpha+i})$,
say by program $P$.
But this allows us to recognize $c$: Given the oracle $x$, first check (using a halting problem solver for $P$) whether $P^{x}$ computes $cc(L_{\alpha+i})$ - which is possible as $cc(L_{\alpha+i})$ is recognizable.
Now, in $cc(L_{\alpha+i})$, $c$ is represented by some integer $j$. It hence only remains to see whether $x$ is the number represented by $j$ in $cc(L_{\alpha+i})$, which is also easy to do.\\
This implies that $c$ is recognizable. But, by definition, $c\in L_{\alpha+1}-L_{\alpha}$. Hence $(L_{\alpha+i}-L_{\alpha})\cap RECOG\neq\emptyset$, which contradicts the assumption that $\alpha$ starts a gap.\\
To see that, if $\alpha$ starts a strong substantial gap, $\alpha$ has to be a limit ordinal, we proceed as follows: Assume for a contradiction that $\alpha$ starts a strong substantial gap and $\alpha=\beta+1$.
Since $\alpha$ starts the gap, $L_{\alpha}-L_{\beta}$ contains a recognizable real $r$. We argue that $cc(L_{\alpha})\in L_{\alpha+1}-L_{\alpha}$ is recognizable, which contradicts the assumption that
$\alpha$ starts a gap. A procedure for describing $cc(L_{\alpha})$ works as follows: Given $x$, simply check whether $x$ is the $<_{L}$-minimal code of a minimal $L$-level containing $r$. This is possible since
$r$ is recognizable.
\end{proof}

The same reasoning in fact supports much stronger conclusions:

\begin{thm}
If $\alpha$ starts a weak substantial gap $[\alpha,\beta]$, then $\beta\geq\omega_{\omega}^{CK,cc(\alpha)}$.
\end{thm}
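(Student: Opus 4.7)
Write $c:=cc(\alpha)$ and suppose toward a contradiction that the weak substantial gap $[\alpha,\beta]$ satisfies $\beta<\omega_{\omega}^{CK,c}$. Since $c\in L_{\alpha+1}-L_{\alpha}\subseteq L_{\beta+1}-L_{\alpha}$, the gap hypothesis gives $c\notin RECOG$. The plan is to contradict this by producing a recognition procedure for a real that must itself lie in some weak substantial gap.

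Let $S$ be the set of reals $y$ such that $y=cc(L_{\alpha'})$ for some index $\alpha'$ starting a weak substantial gap $[\alpha',\beta']$ with $\beta'<\omega_{\omega}^{CK,y}$. The contradiction hypothesis says exactly that $c\in S$, so $S\neq\emptyset$; let $y_{0}=cc(L_{\alpha_{0}})\in S$ be its $<_{L}$-minimum, witnessed by a gap $[\alpha_{0},\beta_{0}]$ with $\beta_{0}<\omega_{\omega}^{CK,y_{0}}$. Then $y_{0}\in L_{\alpha_{0}+1}-L_{\alpha_{0}}\subseteq L_{\beta_{0}+1}-L_{\alpha_{0}}$, so applying the gap property to $[\alpha_{0},\beta_{0}]$ yields $y_{0}\notin RECOG$. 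It now suffices to exhibit a program recognizing $y_{0}$.

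The program takes an oracle $x$ and runs in three phases. Phase one: using the coded-structure techniques of \cite{ITRM} together with the Lost Melody routine for $<_{L}$-minimality, verify that $x$ is the $<_{L}$-minimal code of some $L$-level $L_{\alpha_{x}}$ with $\alpha_{x}$ an index; otherwise return $0$. Phase two: by Theorem \ref{relITRM}, the $ITRM$ with oracle $x$ can compute codes for every $L_{\delta}$ with $\delta<\omega_{\omega}^{CK,x}$, so search for some $\beta_{x}$ in this range for which (a) the set of indices in $[\alpha_{x},\beta_{x}]$ is unbounded in $\beta_{x}$ and (b) no real of $L_{\beta_{x}+1}-L_{\alpha_{x}}$ is recognizable; if no such $\beta_{x}$ is found, return $0$. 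The decidability of clause (b) relies on the corollary following Theorem \ref{char}: for any $r\in L_{\omega_{\omega}^{CK,x}}$ one has $\omega_{\omega}^{CK,r}\leq\omega_{\omega}^{CK,x}$, and $r\in RECOG$ iff $L_{\omega_{\omega}^{CK,r}}\models RECOG(r)$, which is a first-order statement over a structure entirely visible inside the computation via its codes. Phase three: enumerate the reals $y<_{L}x$ (all of which are accessible from $x$) and re-run phases one and two with each such $y$ as the putative candidate (using $\omega_{\omega}^{CK,y}$, also computable from $x$ since $\omega_{\omega}^{CK,y}\leq\omega_{\omega}^{CK,x}$); reject if any such $y$ passes, otherwise accept.

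If $x=y_{0}$, every phase succeeds by membership of $y_{0}$ in $S$ and its $<_{L}$-minimality there. If $x\neq y_{0}$, then either phase one fails ($x$ is not a minimal $L$-level code), or phase two fails (the level coded by $x$ does not start a sufficiently short gap), or phase three fails ($y_{0}$ itself, or some even smaller element of $S$, is discovered). So the program recognizes $y_{0}$, contradicting $y_{0}\notin RECOG$. The main technical obstacle is clause (b): performing, uniformly inside a single $ITRM$-computation with oracle $x$, the recognizability check for every real of $L_{\beta_{x}+1}-L_{\alpha_{x}}$. Everything else is routine coded-structure bookkeeping, but (b) depends essentially on the absoluteness of recognizability furnished by the corollary following Theorem \ref{char}, combined with the first-order evaluation techniques of \cite{ITRM}.
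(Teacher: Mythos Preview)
Your Phase two contains a genuine gap: the appeal to Theorem~\ref{relITRM} to ``compute codes for every $L_\delta$ with $\delta<\omega_\omega^{CK,x}$'' and then search over that range conflates pointwise computability with uniform computability. An $ITRM$-program has a fixed number $n$ of registers, and by Theorem~\ref{hp} its halting times with oracle $x$ are bounded by $\omega_{n+1}^{CK,x}$; consequently a single program can only reach codes for $L$-levels below some fixed $\omega_{n+1}^{CK,x}$, not cofinally in $\omega_\omega^{CK,x}$. The same non-uniformity undermines clause~(b): to decide $r\in RECOG$ via the corollary after Theorem~\ref{char} you must evaluate the first-order statement $RECOG(r)$ inside $L_{\omega_\omega^{CK,r}}$, and for $r\in L_{\beta_x+1}-L_{\alpha_x}$ Chong's bound only gives $\omega_j^{CK,r}\leq\omega_{i+j}^{CK,x}$, so $\omega_\omega^{CK,r}$ may sit arbitrarily close to $\omega_\omega^{CK,x}$. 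No fixed program with oracle $x$ can produce codes for all the required $L_{\omega_\omega^{CK,r}}$, so the recognizability test in (b) cannot be carried out uniformly. You correctly flag (b) as the main obstacle, but the justification you give does not overcome it.

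The paper's proof sidesteps this entirely by hardcoding finite parameters rather than deciding recognizability internally. It fixes the particular $i$ with $\beta<\omega_i^{CK,cc(\alpha)}$, fixes a program $Q$ recognizing a specific real $r\in RECOG$ sitting at the end of the gap, and fixes the natural number $k$ that represents $cc(\alpha)$ inside $cc(L_{\beta+1})$. With these constants baked into the program, the recognition procedure for $cc(\alpha)$ only needs to compute a code for a level below $\omega_i^{CK,x}$, run the single fixed program $Q$ on the reals appearing there, and compare $x$ against the element indexed by $k$; no search toward $\omega_\omega^{CK,x}$ and no internal recognizability check is ever required. If you want to rescue your minimal-counterexample framing, you would need to replace (b) by a criterion that can be verified entirely below $\omega_i^{CK,x}$ for a fixed $i$, which essentially forces you back toward the paper's use of a hardcoded recognizable anchor.
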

\begin{proof}
Assume that $\alpha$ starts a weak substantial gap $[\alpha,\beta]$ where $\beta<\omega_{\omega}^{CK,cc(\alpha)}$, so that $\beta<\omega_{i}^{CK,cc(\alpha)}$ for some minimal $i\in\omega$.
By definition, $\alpha$ is an index, so that $cc(\alpha)\in L_{\alpha+1}$. Passing to the $<_{L}$-smallest code not in $L_{\alpha}$ when necessary, we assume without loss of generality
that $cc(\alpha)\notin L_{\alpha}$. We now want to argue that $cc(\alpha)\in RECOG$, which will be a contradiction to the assumption that $\alpha$ starts a gap.
From $cc(\alpha)$, one can compute $cc(L_{\omega_{i}^{CK,cc(\alpha)}})$ by Theorem \ref{relITRM}. Let $P^{\prime}$ be an $ITRM$-program computing $cc(L_{\omega_{i}^{CK,cc(\alpha)}})$ in the oracle $cc(\alpha)$.
Since $i\in\omega$ is a fixed natural number, we can use $i$ together with $P^{\prime}$ to determine, for an arbitrary oracle $x$, whether $P^{\prime x}$ is a $<_{L}$-minimal code for $L_{\omega_{i}^{CK,x}}$. 
We can hence also compute the $<_{L}$-minimal code for $L_{\beta+1}$ in the oracle $cc(\alpha)$, using program $P$, say. 
By our assumption that $\beta$ ends the gap, we must have $RECOG\cap(L_{\beta+1}-L_{\beta})\neq\emptyset$; say $r\in RECOG\cap(L_{\beta+1}-L_{\beta})$, and let $Q$ be a program for recognizing $r$.
Now, given $x$ in the oracle, we can determine whether $P^{x}$ computes the minimal code for an $L$-level containing a real $z$ such that $Q^{z}\downarrow=1$.
(This can be achieved by searching through the coded structure; since $r$ is recognized by $Q$, the calculation  $Q^{z}$ will terminate for all reals $z$ from the coded structure.)
If this is not the case, then $x\neq cc(\alpha)$. Otherwise, $P^{x}$ has computed $cc(L_{\beta+1})$. In $cc(L_{\beta+1})$, the real $cc(\alpha)$ is represented by some fixed natural number $k\in\omega$ (which can hence be given to our program).
We can now simply test whether $x$ is the real coded by $k$ in $cc(L_{\beta+1})$ by bitwise comparison. This allows us to recognize $cc(\alpha)\in L_{\alpha+1}-L_{\alpha}$, which contradicts the assumption that $\alpha$ starts a gap.
\end{proof}


\begin{thm}
Let $\alpha$ start a weak substantial gap. Then $\alpha$ is admissibly $\Sigma_{1}$-indescribable.
\end{thm}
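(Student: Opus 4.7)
The plan is to show that admissible $\Sigma_{1}$-describability would force $cc(\alpha)$ itself into $RECOG$, directly contradicting the gap hypothesis. The engine is Theorem \ref{char}, which characterizes recognizability in $PRECOG$ via $\Sigma_{1}$-unique-definability over $L_{\omega_{\omega}^{CK,x}}$.

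First, I would fix a specific representative: as in the preceding theorem, pass to the $<_{L}$-smallest code of $L_{\alpha}$ that does not lie in $L_{\alpha}$, so that $cc(\alpha) \in L_{\alpha+1} - L_{\alpha}$. Since $cc(\alpha)$ codes the ordinal $\alpha$ (so all ordinals up to $\alpha$ are $cc(\alpha)$-computable), we have $\omega_{1}^{CK,cc(\alpha)} > \alpha$, hence $\alpha + 1 \leq \omega_{\omega}^{CK,cc(\alpha)}$, which gives $cc(\alpha) \in L_{\omega_{\omega}^{CK,cc(\alpha)}}$, that is, $cc(\alpha) \in PRECOG$. This step is routine but must be spelled out, since Theorem \ref{char} applies only to potentially recognizable reals.

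Now suppose for contradiction that $\alpha$ is admissibly $\Sigma_{1}$-describable. By definition, there is a parameter-free $\Sigma_{1}$-formula $\phi$ such that $cc(\alpha)$ is the unique witness for $\phi(v)$ in $L_{\omega_{\omega}^{CK,cc(\alpha)}}$. Combining this with $cc(\alpha) \in PRECOG$, Theorem \ref{char} yields $cc(\alpha) \in RECOG$.

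To finish, observe that $\alpha \leq \beta$ (since $[\alpha,\beta]$ is a non-empty interval of ordinals), so $L_{\alpha+1} - L_{\alpha} \subseteq L_{\beta+1} - L_{\alpha}$. Thus $cc(\alpha)$ is a recognizable real lying in $L_{\beta+1} - L_{\alpha}$, which contradicts the very definition of a weak substantial gap. The proof is essentially a packaging exercise; the only mild obstacle is the verification that the chosen $cc(\alpha)$ lies in $PRECOG$, and this follows immediately from the fact that any real coding $L_{\alpha}$ pushes $\omega_{1}^{CK}$ strictly past $\alpha$.
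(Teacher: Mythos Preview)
Your argument is correct. The only difference from the paper is packaging: the paper does not cite Theorem~\ref{char} but instead rebuilds the recognition procedure for $cc(\alpha)$ by hand --- from $cc(\alpha)$ one computes the $<_{L}$-minimal code of the first $L$-level $L_{\beta}$ containing a witness for the describing formula $\phi$, checks minimality of that code, and then compares the oracle bitwise against the element coded by the fixed index $k$ representing $cc(\alpha)$ inside $cc(L_{\beta})$. That is exactly the engine behind the ``if'' direction of Theorem~\ref{char}, so you lose nothing by invoking the theorem directly; your version is in fact cleaner, since it makes explicit the one hypothesis that needs checking, namely $cc(\alpha)\in PRECOG$, which the paper's hands-on construction uses silently (the computation of $cc(L_{\beta})$ from $cc(\alpha)$ requires $\beta<\omega_{\omega}^{CK,cc(\alpha)}$). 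The verification you give for $cc(\alpha)\in PRECOG$ via $\omega_{1}^{CK,cc(\alpha)}>\alpha$ is the standard one and is fine.
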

\begin{proof}
Assume for a contradiction that $\alpha$ is $\Sigma_{1}$-indescribable and starts a weak substantial gap. Then $\alpha$ is an Index, so that $cc(\alpha)\in L_{\alpha+1}-L_{\alpha}$ (assuming without loss of generality
that $cc(\alpha)\in L_{\alpha+1}$, passing to the $<_{L}$-minimal code of $\alpha$ not in $L_{\alpha}$ if necessary). 
Now, if $\alpha$ was admissibly $\Sigma_{1}$-describable, we could compute from $cc(\alpha)$ the $<_L$-minimal code of the first $L_{\beta}$ containing a witness for some $\Sigma_{1}$-statement $\phi$ which characterizes $\alpha$. 
Let $P$ be a program that achieves this. By the usual procedure, we can check for an arbitrary oracle $x$ whether $P^{x}$ computes a minimal code of a minimal $L$-level containing such a witness.
Now we must have $cc(\alpha)\in L_{\beta}$, so that $cc(\alpha)$ is represented in $cc(L_{\beta})$ by some fixed natural number $k$. To determine whether $x=cc(\alpha)$, it hence only remains to check
whether $x$ is equal to the number represented by $k$ in the structure coded by the real computed by $P^{x}$, which is also possible. So $cc(\alpha)\in L_{\alpha+1}-L_{\alpha}$ is recognizable,
contradicting the assumption that $\alpha$ starts a gap.
\end{proof}

By the same argument, we get:

\begin{corollary}
Let $[\alpha,\beta]$ be a strong substantial gap, and let $\gamma\in[\alpha,\beta]$. Then $\gamma$ is admissibly $\Sigma_{1}$-indescribable. 
\end{corollary}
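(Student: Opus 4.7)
The plan is to copy the proof of the preceding theorem verbatim, applied pointwise to each $\gamma\in[\alpha,\beta]$. The extra hypothesis of a \emph{strong} gap buys exactly what is needed, namely that every ordinal in the interval is an index, so the codes $cc(\gamma)$ live where we want them.

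Fix $\gamma\in[\alpha,\beta]$ and assume for contradiction that $\gamma$ is admissibly $\Sigma_{1}$-describable, via some parameter-free $\Sigma_{1}$-formula $\phi(v)$ which has $cc(\gamma)$ as its unique witness in $L_{\omega_{\omega}^{CK,cc(\gamma)}}$. Because $[\alpha,\beta]$ is a \emph{strong} substantial gap, $\gamma$ is itself an index, so $cc(\gamma)\in L_{\gamma+1}$; passing to the $<_{L}$-minimal code of $\gamma$ not in $L_{\gamma}$ if necessary, we may assume $cc(\gamma)\in L_{\gamma+1}-L_{\gamma}$. Since $\alpha\leq\gamma\leq\beta$, this gives $cc(\gamma)\in L_{\beta+1}-L_{\alpha}$.

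Now repeat the recognition procedure of the previous theorem with $\gamma$ in place of $\alpha$: from $cc(\gamma)$ we compute, by Theorem \ref{relITRM}, the $<_{L}$-minimal code $c$ of the first $L$-level $L_{\delta}$ that contains a witness for $\phi$; call the computing program $P$. Given an oracle $x$, we first check using the halting-problem solver for $P$ that $P^{x}$ halts on every input, then use the standard truth-evaluation and $<_{L}$-minimality routines from \cite{ITRM} to check that $P^{x}$ codes the $<_{L}$-minimal $L$-level containing a witness for $\phi$. If all of this holds, $cc(\gamma)$ must be represented in the coded structure by some fixed natural number $k$ (computable from $\phi$), so it remains only to verify bitwise that $x$ agrees with the real coded by $k$ in $P^{x}$. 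This recognizes $cc(\gamma)$.

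But $cc(\gamma)\in L_{\beta+1}-L_{\alpha}$, contradicting the assumption that the gap contains no recognizables. Hence no such $\phi$ exists and $\gamma$ is admissibly $\Sigma_{1}$-indescribable. The only non-routine aspect is the observation that strongness of the gap is precisely what is required so that $cc(\gamma)$ falls into the forbidden region $L_{\beta+1}-L_{\alpha}$ for every $\gamma\in[\alpha,\beta]$; the recognition procedure itself is a direct reprise of the one used in the previous theorem.
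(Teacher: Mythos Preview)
Your proposal is correct and follows exactly the approach the paper intends: the paper's own proof is a one-line remark that the argument of the preceding theorem goes through because every $\gamma$ in a strong substantial gap is an index, and you have simply spelled that argument out in full. The key observation you isolate --- that strongness guarantees $cc(\gamma)\in L_{\gamma+1}-L_{\gamma}\subseteq L_{\beta+1}-L_{\alpha}$ for each $\gamma\in[\alpha,\beta]$ --- is precisely the ``crucial property'' the paper alludes to.
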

\begin{proof}
 This follows by the same argument as above, since $\gamma$, being an element of a strong substantial gap, must be an index, which is the crucial property for this argument.
\end{proof}


\subsection{Antigaps}

\begin{defini}
$[\alpha,\beta]$ is a $\delta$-antigap if $\alpha+\delta\leq\beta$, the set of indices is unbounded below $\beta$ and, for each index $\alpha<\gamma<\beta$, $L_{\gamma+1}-L_{\gamma}$ contains a recognizable real.
\end{defini}

We can now demonstrate that potentially recognizable reals continue being recognizable for quite a while after $L_{\omega_{\omega}^{CK}}$:

\begin{thm}
 All elements of $L_{\omega_{\omega2}^{CK}}\cap PRECOG$ are recognizable.
\end{thm}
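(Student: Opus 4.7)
The plan is to verify the characterization of recognizability from Theorem \ref{char}. First I reduce to the non-trivial case: if $x \in L_{\omega_\omega^{CK}}$, then by Theorem \ref{relITRM} $x$ is $ITRM$-computable, hence trivially recognizable. So I assume $x \notin L_{\omega_\omega^{CK}}$ and let $n \geq 1$ be the least natural number with $x \in L_{\omega_{\omega+n}^{CK}}$.

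The crucial ordinal-theoretic step is to show $\omega_\omega^{CK,x} = \omega_{\omega 2}^{CK}$. The Chuaqui bound cited in the proof of the corollary after Theorem \ref{char} gives $\omega_i^{CK,x} \leq \omega_{\omega+n+i}^{CK}$ for every $i$, hence $\omega_\omega^{CK,x} \leq \omega_{\omega 2}^{CK}$. On the other hand, $x \in PRECOG$ together with $x \notin L_{\omega_\omega^{CK}}$ forces $\omega_\omega^{CK,x} > \omega_\omega^{CK}$. Each $\omega_i^{CK,x}$, once $i$ is large enough that $x \in L_{\omega_i^{CK,x}}$, is itself an $L$-admissible, since then $L_{\omega_i^{CK,x}}[x] = L_{\omega_i^{CK,x}}$; so $\omega_\omega^{CK,x}$ is a strict limit of $L$-admissibles. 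But in $(\omega_\omega^{CK}, \omega_{\omega 2}^{CK}]$ the $L$-admissibles form the discrete $\omega$-sequence $\omega_{\omega+1}^{CK}, \omega_{\omega+2}^{CK}, \ldots$ accumulating only at $\omega_{\omega 2}^{CK}$, and hence $\omega_\omega^{CK,x} = \omega_{\omega 2}^{CK}$.

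By Theorem \ref{char} it now suffices to exhibit a parameter-free $\Sigma_1$-formula $\phi(v)$ whose unique witness in $L_{\omega_{\omega 2}^{CK}}$ is $x$. The ordinal $\omega_{\omega+n}^{CK}$ is the unique admissible $\alpha$ such that the set of $L$-admissibles strictly below $\alpha$ is in bijection with $\omega + n - 1$, which is a parameter-free $\Sigma_1$-condition, so $L_{\omega_{\omega+n}^{CK}}$ itself is $\Sigma_1$-definable. As a countable admissible, $L_{\omega_{\omega+n}^{CK}}$ has $\Sigma_1$-projectum $\omega$ (its own $\Sigma_1$-theory is a $\Sigma_1$-definable subset of $\omega$ not lying in $L_{\omega_{\omega+n}^{CK}}$, by G\"odelian diagonalization), so $x$ is $\Sigma_1$-definable parameter-free in $L_{\omega_{\omega+n}^{CK}}$ by some formula $\psi(v) \equiv \exists w\, \theta(v,w)$ with $\theta$ bounded and any integer parameters absorbed as numerals. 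Relativizing,
\[
\phi(v) \;\equiv\; \exists M\, \exists w\, \bigl(M = L_{\omega_{\omega+n}^{CK}} \wedge v \in M \wedge w \in M \wedge \theta(v,w)\bigr)
\]
is a parameter-free $\Sigma_1$-formula over $L_{\omega_{\omega 2}^{CK}}$ with unique witness $x$, and Theorem \ref{char} delivers $x \in RECOG$. The main obstacle I anticipate is the fine-structural step: verifying the $\Sigma_1$-definability of $\omega_{\omega+n}^{CK}$ through the order type of admissibles (which sidesteps the $\Pi_1$-complexity of ``least admissible'') and confirming the projectum claim uniformly for all $\omega_{\omega+n}^{CK}$, together with the bookkeeping needed to absorb integer parameters into the final parameter-free formula.
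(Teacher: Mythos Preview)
Your approach is genuinely different from the paper's. The paper argues operationally via Theorem \ref{allornothing}: for the index $\gamma$ at which $x$ appears it recognizes the canonical code $c_\gamma$ by first recognizing a code for $L_{\omega_{\omega+i}^{CK}}$ (using that $i$ is a fixed explicit integer, so the relevant ``there are $\omega+i$ admissibles'' check can be hard-wired) and then computing $c_\gamma$ from that code; the all-or-nothing theorem then transfers recognizability from $c_\gamma$ to $x$. You instead go through Theorem \ref{char}, first pinning down $\omega_{\omega}^{CK,x}=\omega_{\omega2}^{CK}$ and then producing a parameter-free $\Sigma_1$ description of $x$ in $L_{\omega_{\omega2}^{CK}}$. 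Your computation $\omega_{\omega}^{CK,x}=\omega_{\omega2}^{CK}$ is correct and is a pleasant observation that the paper does not isolate.

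There is, however, a real gap at the fine-structural step, exactly where you anticipated one. The inference ``$\Sigma_1$-projectum $\omega$ $\Rightarrow$ $x$ is parameter-free $\Sigma_1$-definable'' is invalid in general: $\rho_1=\omega$ only yields a $\Sigma_1$-surjection from $\omega$ using the standard parameter $p_1$, and gives nothing parameter-free unless $p_1=\emptyset$. Your parenthetical justification (the parameter-free $\Sigma_1$-theory is a new subset of $\omega$) holds for \emph{every} admissible $L_\alpha$, including ones far from pointwise $\Sigma_1$-definable, so it cannot be doing the work; and ``countable admissible'' certainly does not force $\rho_1=\omega$. What you actually need is that $L_{\omega_{\omega+n}^{CK}}$ equals its own $\Sigma_1$-Skolem hull of $\emptyset$. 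That \emph{is} true here, and the argument is the one you already sketched for the definability of $\omega_{\omega+n}^{CK}$: the hull collapses to some admissible $L_\beta$; each $\omega_{\omega+j}^{CK}$ with $j<n$ is parameter-free $\Sigma_1$-definable (via ``there exists a transitive $M\models KP+V{=}L$ whose internal admissibles have order type $\omega+j-1$, and $\alpha=On\cap M$''), hence lies in the hull, so $\beta>\omega_{\omega+n-1}^{CK}$ and therefore $\beta=\omega_{\omega+n}^{CK}$. With this correction your route goes through. You should also make explicit that the clause ``$M=L_{\omega_{\omega+n}^{CK}}$'' in $\phi$ is written so as to determine $M$ \emph{uniquely} (e.g.\ via the internal order type of admissibles being exactly $\omega+n-1$); otherwise larger admissible $L_\beta$ satisfy the same existential clause and $\phi$ may acquire spurious witnesses.
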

\begin{proof}
 (Sketch) It suffices to show that, given $x\in L_{\omega_{\omega2}^{CK}}$, the index $\gamma$ where $x$ appears has a recognizable $<_{L}$-minimal code. This can be seen as follows:
Let $i\in\omega$ be minimal such that $x\in L_{\omega_{\omega+i+1}^{CK}}$. Given $x$, one can compute (by $P$, say) the $<_{L}$-minimal code $c$ of $\omega_{\omega+i}^{CK}$. Since $i$ can be given
to the program explicitely, it is possible to determine for a given $y$ whether $P^{y}$ computes $c$. By definition of $\omega_{\omega+i+1}^{CK}$, there is a Turing program $T$ that computes the minimal code $c_{\gamma}$ for
$\gamma$ from $c$. $T$ can also be explicitely given to our program. Since $\gamma$ is an index, $c_{\gamma}\in (L_{\gamma+1}-L_{\gamma})$. Via the procedure just described (find $c$, then compute $T^{c}$), $c_{\gamma}$ is
recognizable. By the theorem above, every potentially recognizable real generated over $L_{\gamma}$ is hence recognizable. Hence $x\in RECOG$. 
\end{proof}

By the same reasoning, one can see that the first strong gap also appears above the first limit of limit of admissibles, the first limit of limits of limits of admissibles etc. The first gap corresponds to the first admissibly $\Sigma_{1}$-indescribable 
ordinal by Theorem \ref{char}, which is fairly high.\\

\section{Acknowledgments}

We are indebted to Philipp Schlicht for many helpful discussions on forcing over $KP$, sketching a proof of Lemma \ref{relbar} a crucial hint for the proof of Theorem \ref{wsigma1}
and suggesting several very helpful references.
We also thank Philipp Welch for suggesting the use of Cohen-forcing over $L_{\omega_{\omega}^{CK}}$ as a method for obtaining unrecognizables low down in the constructible hierarchy.

\end{document}